\numberwithin{equation}{section}
\numberwithin{figure}{section}
\theoremstyle{plain}
\newtheorem{thm}{\protect\theoremname}
  \theoremstyle{plain}
  \newtheorem{prop}[thm]{\protect\propositionname}
  \theoremstyle{definition}
  \newtheorem{defn}[thm]{\protect\definitionname}
  \theoremstyle{remark}
  \newtheorem{rem}[thm]{\protect\remarkname}
  \theoremstyle{plain}
  \newtheorem{lem}[thm]{\protect\lemmaname}
  \theoremstyle{plain}
  \newtheorem{cor}[thm]{\protect\corollaryname}
  \theoremstyle{definition}
  \newtheorem{example}[thm]{\protect\examplename}
  \providecommand{\corollaryname}{Corollary}
  \providecommand{\definitionname}{Definition}
  \providecommand{\examplename}{Example}
  \providecommand{\lemmaname}{Lemma}
  \providecommand{\propositionname}{Proposition}
  \providecommand{\remarkname}{Remark}
\providecommand{\theoremname}{Theorem}
\begin{document}
\global\long\def\quot{\operatorname{Quot}}

\global\long\def\trdeg{\operatorname{tr.deg}}

\global\long\def\height{\operatorname{ht}}

\global\long\def\rk{\operatorname{rk}}

\global\long\def\ord{\operatorname{ord}}

\global\long\def\initial{\operatorname{in}}

\global\long\def\gr{\operatorname{gr}}

\global\long\def\lex{\operatorname{lex}}

\global\long\def\supp{\operatorname{supp}}

\global\long\def\assign{:=}

\global\long\def\ll{\mathfrak{L}}

\global\long\def\kk{\mathscr{k}}
\global\long\def\KK{\mathcal{k}}

\title[{The Łojasiewicz Exponent via
The Hamburger-Noether Process}]{The Łojasiewicz Exponent via The Valuative Hamburger-Noether Process}

\author{Szymon Brzostowski, Tomasz Rodak}

\date{29 September 2016}
\begin{abstract}
Let $\kk$ be an algebraically closed field of any characteristic.
We apply the Hamburger-Noether process of successive quadratic transformations
to show the equivalence of two definitions of the Łojasiewicz
exponent $\ll(\mathfrak{a})$ of an ideal $\mathfrak{a}\subset\kk[[x,y]]$. 
\end{abstract}

\keywords{Łojasiewicz exponent, quadratic transformation, valuation,
  integral closure}    

\subjclass[2010]{Primary 14B05, 13B22; Secondary 13H05, 13F30}

\maketitle
\tableofcontents{}

\section{Introduction}

Let $\kk$ be an algebraically closed field of arbitrary characteristic.
Let $\Xi$ denote the set of pairs of formal power series $\varphi\in\kk[[t]]^{2}$
such that $\varphi\ne\mathbf{0}$ and $\varphi(0)=\mathbf{0}$. We
call the elements of $\Xi$ \emph{parametrizations}. We say that a
parametrization $\varphi$ is a \emph{parametrization of }a formal
power series $f\in\kk[[x,y]]$ if $f\circ\varphi=0$. For $\varphi=(\varphi_{1},\ldots,\varphi_{n})\in\kk[[t]]^{n}$
we put $\ord\varphi:=\min_{j}\ord\varphi_{j}$, where $\ord\varphi_{j}$
stands for the order of the power series $\varphi_{j}$. Let $\mathfrak{a}\subset\kk[[x,y]]$
be an ideal. We consider the \emph{Łojasiewicz exponent} of $\mathfrak{a}$
defined by the formula 
\begin{equation}
\ll(\mathfrak{a}):=\sup_{\varphi\in\Xi}\left(\inf_{f\in\mathfrak{a}}\frac{\ord f\circ\varphi}{\ord\varphi}\right).\label{eq:1-1}
\end{equation}
Such concept was introduced and studied by many authors in different
contexts. Lejeune-Jalabert and Teissier {\cite{LT08}} observed
that, in the case of several complex variables, $\ll(\mathfrak{a})$
is the optimal exponent $r>0$ in the Łojasiewicz inequality 
\[
\exists_{C,\varepsilon>0}\forall_{||x||<\varepsilon}\max_{j}|f_{j}(x)|\geqslant C||x||^{r},
\]
where $(f_{1},\ldots,f_{k})$ is an arbitrary set of generators of
$\mathfrak{a}$. Moreover, they proved that, with the help of the
notion of integral closure of an ideal, the number $\ll(\mathfrak{a})$
may be seen algebraically. This is what we generalize below (see Theorem
\ref{thm:Our main}) partly answering \cite[Question
2]{Brzostowski2015}. D'Angelo {\cite{D'Angelo82}} introduced
$\ll(\mathfrak{a})$ 
independently, as an order of contact of $\mathfrak{a}$. He showed
that this invariant plays an important role in complex function theory
in domains in $\mathbb{C}^{n}$.

There has been some interest in understanding the nature of the curves
that `compute' $\ll(\mathfrak{a})$. In fact, the supremum in (\ref{eq:1-1})
may be replaced by maximum. A more exact result in this direction
says that if $\mathfrak{a}=(f_{1},\ldots,f_{m})\kk[[x,y]]$ is an
$(x,y)$-primary ideal, then there exists a parametrization $\varphi$
of $f_{1}\times\cdots\times f_{m}$ such that 
\[
\ll(\mathfrak{a})=\inf_{f\in\mathfrak{a}}\frac{\ord f\circ\varphi}{\ord\varphi}.
\]
For holomorphic ideals, this was proved by Chądzyński and Krasiński
{\cite{CK88a}}, and independently by McNeal and N{é}methi {\cite{McNeal2005}}.
The case of ideals in $\kk[[x,y]]$, where $\kk$ is as above, is
due to the authors {\cite{Brzostowski2015}}. De Felipe, Garc{í}a
Barroso, Gwoździewicz and Płoski {\cite{FBGP2016}} gave a shorter
proof of this result; moreover, they answered \cite[Question
1]{Brzostowski2015}, by showing that $\ll(\mathfrak{a})$ is always a
Farey 
number, i.~e. a rational number of the form $N+b/a$, where $N$,
$a$, $b$ are integers such that $0<b<a<N$.

\section{Methods and results}

Once and for all we agree that all the rings considered in the paper
are commutative with unity. Let $\overline{\mathfrak{a}}$ denote
the integral closure of an ideal $\mathfrak{a}$ (see Section \ref{sec:Integral-closure}).
Our main result is
\begin{thm}
\label{thm:Our main}Let $\mathfrak{a}\subset\kk[[x,y]]$ be an ideal.
Then 
\begin{equation}
\ll(\mathfrak{a})=\inf\left\{ \frac{p}{q}:(x,y)^{p}\kk\left[\left[x,y\right]\right]\subset\overline{\mathfrak{a}^{q}}\right\} .\label{eq:gwiazdka}
\end{equation}
\end{thm}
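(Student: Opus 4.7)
My plan is to interpret both sides of (\ref{eq:gwiazdka}) through the valuative criterion for integral closure, with the valuations realized concretely as parametrizations by means of the Hamburger-Noether process. The key auxiliary fact I would establish (or reuse from the preceding sections) is that for any ideal $\mathfrak{b}\subset\kk[[x,y]]$ and element $g\in\kk[[x,y]]$,
\[
g\in\overline{\mathfrak{b}}\iff\ord(g\circ\varphi)\geq\min_{f\in\mathfrak{b}}\ord(f\circ\varphi)\quad\text{for every }\varphi\in\Xi.
\]
The ``only if'' direction is immediate from an integral equation of $g$ over $\mathfrak{b}$ combined with the fact that $v_{\varphi}(h):=\ord(h\circ\varphi)$ extends to a valuation of the fraction field of $\kk[[x,y]]$ that is nonnegative on $\kk[[x,y]]$. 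The harder ``if'' direction requires that every divisorial valuation centered at $(x,y)$ --- in particular each of the finitely many Rees valuations of $\mathfrak{b}$ --- is induced by some parametrization; this is precisely the output of the Hamburger-Noether process, which constructs such a $\varphi$ via iterated quadratic transformations in a characteristic-free manner.

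Granted the criterion, the theorem becomes a short formal computation. For any $\varphi\in\Xi$ one has $\min_{m\in(x,y)^{p}}\ord(m\circ\varphi)=p\ord\varphi$ (realized by whichever of $x^{p},y^{p}$ maps to the series of smaller order) and $\min_{h\in\mathfrak{a}^{q}}\ord(h\circ\varphi)=q\min_{f\in\mathfrak{a}}\ord(f\circ\varphi)$ (because $v_{\varphi}$ is a valuation). Applying the criterion to $\mathfrak{b}=\mathfrak{a}^{q}$ for each monomial generator $m\in(x,y)^{p}$ gives
\[
(x,y)^{p}\kk[[x,y]]\subset\overline{\mathfrak{a}^{q}}\iff p\ord\varphi\geq q\min_{f\in\mathfrak{a}}\ord(f\circ\varphi)\ \text{for all }\varphi\in\Xi\iff\frac{p}{q}\geq\ll(\mathfrak{a}).
\]
Taking the infimum over admissible pairs $(p,q)$ then yields $\inf\{p/q:p/q\geq\ll(\mathfrak{a})\}=\ll(\mathfrak{a})$, since every extended real is the infimum of the rationals at least as large as itself; when $\mathfrak{a}$ is not $(x,y)$-primary both sides equal $+\infty$ (the defining set on the right being empty).

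The main obstacle is clearly the ``if'' direction of the valuative criterion. In characteristic zero one could sidestep it using Puiseux expansions, but in arbitrary characteristic one must treat divisorial valuations directly through the Hamburger-Noether algorithm of successive quadratic transformations, showing that any valuation arising from such a sequence of blowups centered at infinitely near points is captured by a formal parametrization $\varphi\in\Xi$. Once this dictionary between valuations and parametrizations is secured, the rest of the argument reduces to routine manipulation of orders and infima.
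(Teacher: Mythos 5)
Your overall strategy coincides with the paper's: both sides of (\ref{eq:gwiazdka}) are funnelled through a parametric criterion of integral dependence (the paper's Theorem \ref{thm:integrality criterion}), that criterion is obtained from the valuative criterion plus the Hamburger--Noether process, and the concluding computation with $p$, $q$ and the infimum is the same routine manipulation, including the degenerate non-primary cases.

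One intermediate claim, however, is false as you state it and hides the real work. You assert that every divisorial valuation centered at $(x,y)$ \emph{is induced by} some parametrization. It is not: for $\varphi\in\Xi$ with $\varphi^{*}$ injective, the valuation $v_{\varphi}(g)=\ord(g\circ\varphi)$ has residue field $\kk$ (the residue map factors through $\kk[[t]]\to\kk$), so $\trdeg_{\kk}v_{\varphi}=0$, whereas a divisorial valuation has residue field of transcendence degree $1$ over $\kk$. Concretely, already $\nu=\ord_{(x,y)}$ is realized by no single $\varphi$: if $\varphi=(at,bt)$ then $\ord\left((bx-ay)\circ\varphi\right)>1=\nu(bx-ay)$. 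What is true, and what suffices, is the finite version: writing the divisorial valuation as $\ord_{R_{i}}$ for a stage of the quadratic-transform sequence and expressing it as an order function in new coordinates (Lemmas \ref{lem:parametry indukcja} and \ref{lem: ord =00003D rzad w szeregach}), a choice of direction $(a,b)$ avoiding the zeros of the finitely many relevant initial forms yields a $\varphi$ on which $\ord(\,\cdot\circ\varphi)$ agrees with $\ord_{R_{i}}$ for exactly the finitely many elements in play ($h$ and the generators of $\mathfrak{b}$). This genericity step is the actual content of the construction and must be made explicit; with it, your argument closes. A secondary difference: you reach divisorial valuations via the Rees valuations of $\mathfrak{b}$, whereas the paper starts from an arbitrary rank-one discrete valuation $\nu$ supplied by Theorem \ref{thm:valuative criterion} (possibly non-divisorial) and replaces it by $\ord_{R_{i}}$ for a sufficiently deep stage, using the two-dimensional part of Abhyankar's theorem (Theorem \ref{thm:Main Abhyankar} via Lemma \ref{lem:wniosek z glownego}). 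Your shortcut is legitimate but imports the existence and finiteness of Rees valuations, a heavier input than the plain valuative criterion, and in either case the finite-set conversion above is still needed.
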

The general idea of the proof is the following. It is easy to see,
that the right hand side of (\ref{eq:gwiazdka}) is equal to 
\[
\sup_{\nu}\frac{\nu\left(\mathfrak{a}\right)}{\nu\left(\left(x,y\right)\kk\left[\left[x,y\right]\right]\right)},
\]
where $\nu$ runs through the set of all rank one discrete valuations
with center $\left(x,y\right)\kk\left[\left[x,y\right]\right]$. This
is a consequence of the well-known valuative criterion of integral
dependence (see Theorem \ref{thm:valuative criterion}). On the other
hand, there is a correspondance between valuations of the field $\kk(C)$
and parametrizations centered at points of a given
irreducible curve $C$ (see \cite[Chapter V \S10]{walker1962}).  A mathematician's
basic instinct, then, lead us to believe that the same reasoning could
be repeated for parametrizations in place of valuations. For this
we need a version of criterion of integral dependence which is based
on parametrizations (well-known in the complex analytic setting).
This is where the Hamburger-Noether process comes in. Namely, if $\left(R,\mathfrak{m}\right)$
is a local regular two-dimensional domain, then using Abhyankar theorem
(Theorem \ref{thm:Main Abhyankar}) we may find for any given valuation $\nu$ with center
$\mathfrak{m}$ a sequence of quadratic transformations of $R$ producing
rings and their associated valuations which, respectively, approximate
the valuation ring of $\nu$ and $\nu$ itself. The aforementioned
valuations, given by the process, are in fact expressible in a quite
explicit form even in the case $R=\kk\left[\left[x_{1},\ldots,x_{n}\right]\right]$
(see Lemmas \ref{lem:parametry indukcja} and \ref{lem: ord =00003D rzad w szeregach});
however, the unique feature of Abhyankar theorem is the `approximation
phenomenon', which for non-divisorial valuations only holds in the
two-dimensional case (cf. Example \ref{exa: przyk=000142ad tr=0000F3jwymiarowy}).
Altogether, the above observations plus the usual valuative criterion
of integral dependence allows us to prove a parametric version of
the criterion over $\kk\left[\left[x,y\right]\right]$.

The structure of the paper is as follows. Sections \ref{sec:Valuations}
and \ref{sec:Integral-closure} are of introductory nature. In Section
\ref{sec:Quadratic-transformation-of} we give detailed description
of the concept of the quadratic transformation of a local regular
domain. This notion was developed and used by Zariski and Abhyankar
in the 50's in the framework of valuation theory and the resolution
of singularities problem. A sequence of successive quadratic transformations
starting from a local regular domain containing an algebraically closed
field leads to an inductive construction called the Hamburger-Noether
process. This is described in Section \ref{sec:Hamburger-Noether-expansion}.
In this setting Hamburger-Noether process may be considered as a generalization
of a classical construction of the normalization of a plane algebroid
curve (see \cite{Cam80,Plo2013}) to the case of valuations \cite{Galindo1994}.
Finally, in Sections \ref{sec:Parametric-criterion-of} and \ref{sec:The-main-result}
we prove the aforementioned parametric criterion of integral dependence
and as a result obtain Theorem \ref{thm:Our main}.

\section{Valuations\label{sec:Valuations}}

An integral domain $V$ is called a \emph{valuation ring} if every
element $x$ of its field of fractions $K$ satisfies
\[
x\notin V\implies1/x\in V.
\]
We say that $V$ is a \emph{valuation ring of $K$}. The set of ideals
of a valuation ring $V$ is totally ordered by inclusion. In particular,
$V$ is a local ring. In general, this ring need not be Noetherian,
nevertheless its finitely generated ideals are necessarily principal.

A \emph{valuation} of a field $K$ is a group homomorphism $\nu\colon K^{*}\to\text{\ensuremath{\Gamma}}$,
where $\Gamma$ is a totally ordered abelian group (written additively),
such that for all $x,y\in K^{*}$, if $x+y\ne0$ then 
\[
\nu\left(x+y\right)\geqslant\min\left\{ \nu\left(x\right),\nu\left(y\right)\right\} .
\]
Occasionally, when convenient, we will extend $\nu$ to $K$ setting
$\nu\left(0\right):=+\infty$. The image of $\nu$ is called the \emph{value
group} of $\nu$ and is denoted $\Gamma_{\nu}$. Set 
\begin{align*}
R_{\nu}:= & \left\{ x\in K:x=0\text{ or }\nu\left(x\right)\geqslant0\right\} ,\\
\mathfrak{m}_{\nu}:= & \left\{ x\in K:x=0\text{ or }\nu\left(x\right)>0\right\} .
\end{align*}
Then $R_{\nu}$ is a valuation ring of $K$ and $\mathfrak{m}_{\nu}$
is its maximal ideal.

Let $\Gamma$ be an ordered abelian group. A subgroup $\Gamma'\subset\Gamma$
is called \emph{isolated} if the relations $0\leqslant\alpha\leqslant\beta$,
$\alpha\in\Gamma$, $\beta\in\Gamma'$ imply $\alpha\in\Gamma'$. The
set of isolated subgroups of $\Gamma$ is totally ordered by inclusion.
The number of proper isolated subgroups of $\Gamma$ is called the
\emph{rank} of $\Gamma$, and written $\rk\Gamma$. If $\nu$ is a
valuation of a field $K$, then we say that $\nu$ is of rank $\rk\nu:=\rk\Gamma_{\nu}$.
It is well known that the rank of $\nu$ is equal to the Krull dimension
of $R_{\nu}$ \cite[VI.4.5 Proposition 5]{BourbakiCA1972}.

If $V$ is a valuation ring of $K$, then there exists a valuation
$\nu$ of $K$ such that $V=R_{\nu}$. If $\nu_{1},\nu_{2}$ are valuations
of $K$ then $R_{\nu_{1}}=R_{\nu_{2}}$ if and only if there exists
an order-preserving group isomorphism $\varphi\colon\Gamma_{\nu_{1}}\to\Gamma_{\nu_{2}}$
satisfying $\nu_{2}=\varphi\circ\nu_{1}$. In such a case we say that
valuations $\nu_{1}$ and $\nu_{2}$ are \emph{equivalent}. 

Let $R$ be an integral domain with field of fractions $K$. The valuation
$\nu$ of $K$ is said to be \emph{centered} on $R$ if $R\subset R_{\nu}$.
In this case the prime ideal $\mathfrak{p}=\mathfrak{m}_{\nu}\cap R$
is called the \emph{center} of $\nu$ on $R$. Quite generally, if
$A\subset B$ is a ring extension, $\mathfrak{q}$ is a prime ideal
of $B$ and $\mathfrak{p}=\mathfrak{q}\cap A$ then we have a natural
monomorphism $A/\mathfrak{p}\hookrightarrow B/\mathfrak{q}$. Consequently,
the residue field of $\mathfrak{p}$, that is the field of fractions
of $A/\mathfrak{p}$, may be considered as a subfield of the residue
field of $\mathfrak{q}$. In this setting we have the following important
dimension inequality due to I. S. Cohen. We write below $\trdeg_{A}B$
for the transcendence degree of the field of fractions of $B$ over
that of $A$, where $A\subset B$ is an extension of integral domains. 
\begin{thm}[{\cite[Theorem 15.5]{Matsumura1989}}]
\label{thm:dimension inequality}Let $A$ be a Noetherian integral
domain, and $B$ an extension ring of $A$ which is an integral domain.
Let $\mathfrak{q}$ be a prime ideal of $B$ and $\mathfrak{p}=\mathfrak{q}\cap A$;
then we have
\[
\height\mathfrak{q}+\trdeg_{A/\mathfrak{p}}B/\mathfrak{q}\leqslant\height\mathfrak{p}+\trdeg_{A}B.
\]
\end{thm}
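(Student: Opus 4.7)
The plan is a classical two-step reduction followed by an induction: first reduce to a finitely generated $A$-subalgebra $B_0 \subset B$, then reduce further to a polynomial ring $A[T_1,\ldots,T_N]$, and finally prove the polynomial-ring case by induction on $N$.

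For the first reduction, assume $\height \mathfrak{q} = n < \infty$ (the infinite-height case is handled by letting $n \to \infty$ at the end). Pick a strictly increasing chain $\mathfrak{q}_0 \subsetneq \cdots \subsetneq \mathfrak{q}_n = \mathfrak{q}$ of primes in $B$ and elements $z_i \in \mathfrak{q}_i \setminus \mathfrak{q}_{i-1}$; also fix $y_1, \ldots, y_m \in B$ whose images form a transcendence basis of $B/\mathfrak{q}$ over $A/\mathfrak{p}$, where $m = \trdeg_{A/\mathfrak{p}} B/\mathfrak{q}$. Set $B_0 := A[y_1, \ldots, y_m, z_1, \ldots, z_n] \subset B$, a finitely generated $A$-domain. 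By construction $\mathfrak{q}_0 := \mathfrak{q} \cap B_0$ satisfies $\height \mathfrak{q}_0 \geq n$ and $\trdeg_{A/\mathfrak{p}} B_0/\mathfrak{q}_0 \geq m$, while $\trdeg_A B_0 \leq \trdeg_A B$. Hence the theorem for $(B_0, \mathfrak{q}_0)$ implies the theorem for $(B, \mathfrak{q})$.

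For the second reduction, write $B_0 \cong C/\mathfrak{I}$ with $C := A[T_1, \ldots, T_N]$ and $\mathfrak{I}$ a prime of $C$ satisfying $\mathfrak{I} \cap A = (0)$. Let $\tilde{\mathfrak{q}} \supset \mathfrak{I}$ be the lift of $\mathfrak{q}_0$ to $C$; concatenating a chain from $(0)$ to $\mathfrak{I}$ with one from $\mathfrak{I}$ to $\tilde{\mathfrak{q}}$ inside $C$ gives $\height \tilde{\mathfrak{q}} \geq \height \mathfrak{I} + \height \mathfrak{q}_0$, and one has $C/\tilde{\mathfrak{q}} \cong B_0/\mathfrak{q}_0$. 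Applying the polynomial-ring inequality to both $\tilde{\mathfrak{q}}$ (lying over $\mathfrak{p}$) and $\mathfrak{I}$ (lying over $(0)$) then yields
\[
\height \mathfrak{q}_0 + \trdeg_{A/\mathfrak{p}} B_0/\mathfrak{q}_0 \leq (\height \tilde{\mathfrak{q}} - \height \mathfrak{I}) + \trdeg_{A/\mathfrak{p}} C/\tilde{\mathfrak{q}} \leq \height \mathfrak{p} + N - \height \mathfrak{I} \leq \height \mathfrak{p} + \trdeg_A B_0.
\]

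The polynomial-ring case is handled by induction on $N$: the step $N-1 \to N$ passes through the intermediate Noetherian domain $A[T_1, \ldots, T_{N-1}]$ and uses the additivity of transcendence degree in towers of field extensions. The base case $N = 1$ splits according to whether a prime $\mathfrak{Q} \subset A[T]$ with $\mathfrak{Q} \cap A = \mathfrak{p}$ coincides with $\mathfrak{p}A[T]$ (in which case $\trdeg_{A/\mathfrak{p}} A[T]/\mathfrak{Q} = 1$ and $\height \mathfrak{Q} = \height \mathfrak{p}$, giving equality) or strictly contains it (in which case $A[T]/\mathfrak{Q}$ is algebraic over $A/\mathfrak{p}$, so $\trdeg = 0$). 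The anticipated main obstacle is this second alternative, where one needs the height bound $\height \mathfrak{Q} \leq \height \mathfrak{p} + 1$; this follows from $\dim A[T]_{\mathfrak{p}A[T]} \leq \dim A_{\mathfrak{p}} + 1$, a nontrivial dimension-theoretic fact deduced from Krull's principal ideal theorem in the fibre $(A_{\mathfrak{p}}/\mathfrak{p}A_{\mathfrak{p}})[T]$. The remainder of the argument is routine bookkeeping with chains of primes and transcendence bases.
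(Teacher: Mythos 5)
The paper offers no proof of this statement---it is imported verbatim as \cite[Theorem 15.5]{Matsumura1989}---so there is nothing in-paper to compare against; your proposal reconstructs the classical Cohen argument that Matsumura gives (reduce to a finitely generated subalgebra, then to a polynomial ring, then induct on the number of variables, the heart being the analysis of primes of $A[T]$ lying over $\mathfrak{p}$). The first reduction and the polynomial-ring induction are sound, up to the small slip that the height bound you need in the base case is $\dim A[T]_{\mathfrak{Q}}\leqslant\dim A_{\mathfrak{p}}+1$ for $\mathfrak{Q}$ lying over $\mathfrak{p}$, not $\dim A[T]_{\mathfrak{p}A[T]}\leqslant\dim A_{\mathfrak{p}}+1$.

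There is, however, a genuine direction error in your second reduction. The final inequality of your displayed chain, $\height\mathfrak{p}+N-\height\mathfrak{I}\leqslant\height\mathfrak{p}+\trdeg_{A}B_{0}$, amounts to $\height\mathfrak{I}+\trdeg_{A}C/\mathfrak{I}\geqslant N$, whereas applying the polynomial-ring inequality to $\mathfrak{I}$, as you propose, yields exactly the opposite bound $\height\mathfrak{I}+\trdeg_{A}C/\mathfrak{I}\leqslant\height(0)+N=N$. So the step does not follow from what you invoke. The needed lower bound is nonetheless true: for primes of $C=A[T_{1},\ldots,T_{N}]$ the inequality is in fact an equality (your own base-case analysis gives equality in both alternatives for $N=1$, and equality persists through the induction); alternatively, since $\mathfrak{I}\cap A=(0)$, every prime contained in $\mathfrak{I}$ contracts to $(0)$, so $\height_{C}\mathfrak{I}=\height_{K[T]}\mathfrak{I}K[T]=N-\trdeg_{K}\operatorname{Frac}(C/\mathfrak{I})$ with $K=\operatorname{Frac}(A)$, by the dimension formula for affine domains over a field. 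Either observation closes the gap; so does Matsumura's own arrangement, which adjoins one generator at a time so that the quotient step only ever involves a kernel in $A'[T]$, automatically of height one because $K'[T]$ is a principal ideal domain and the transcendence degree drops by exactly one. As written, though, the chain of inequalities is not justified.
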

In what follows we will be interested in the case where $\left(R,\mathfrak{m},\kk\right)$
is a local Noetherian domain with residue field $\kk$ and $\nu$
is a valuation with center $\mathfrak{m}$ on $R$. We set $\trdeg_{\kk}\nu:=\trdeg_{\kk}R_{\nu}/\mathfrak{m}_{\nu}$.
Directly from the above theorem we get:
\begin{prop}
\label{rank inequality}Let $\left(R,\mathfrak{m},\kk\right)$ be
a local Noetherian domain and let $\nu$ be a valuation with center
$\mathfrak{m}$ on $R$. Then
\[
\rk\nu+\trdeg_{\kk}\nu\leqslant\dim R.
\]
In particular, $\trdeg_{\kk}\nu\leqslant\dim R-1$.
\end{prop}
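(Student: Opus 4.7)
The plan is to apply Theorem~\ref{thm:dimension inequality} directly to the ring extension $R \subset R_\nu$, with the choice $\mathfrak{q} := \mathfrak{m}_\nu$. Since $\nu$ has center $\mathfrak{m}$, the contracted prime is $\mathfrak{p} = \mathfrak{m}_\nu \cap R = \mathfrak{m}$; $R$ is Noetherian by assumption; and $R_\nu$ is a domain as a subring of the common field of fractions $K$ of $R$ and $R_\nu$. Thus all hypotheses of Cohen's theorem are met.

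Next I would identify each of the four terms appearing in Cohen's inequality with a familiar quantity. First, $\height \mathfrak{m}_\nu = \dim R_\nu$ because $\mathfrak{m}_\nu$ is the maximal ideal of the local ring $R_\nu$, and $\dim R_\nu = \rk \nu$ by the Bourbaki result already quoted in Section~\ref{sec:Valuations}. Second, $\trdeg_{R/\mathfrak{m}} R_\nu/\mathfrak{m}_\nu = \trdeg_\kk \nu$ by the very definition of the right-hand side. Third, $\height \mathfrak{m} = \dim R$ since $R$ is local with maximal ideal $\mathfrak{m}$. Finally, $\trdeg_R R_\nu = 0$ because $R$ and $R_\nu$ share the same field of fractions $K$. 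Substituting these four identifications into Theorem~\ref{thm:dimension inequality} yields
\[
\rk \nu + \trdeg_\kk \nu \leq \dim R,
\]
which is the first assertion.

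For the \emph{in particular} clause, the case $\dim R = 0$ is vacuous: then $R$ is a field, $\mathfrak{m} = 0$, and $\nu$ is necessarily trivial, so both $\rk \nu$ and $\trdeg_\kk \nu$ vanish. When $\dim R \geq 1$ we have $\mathfrak{m} \neq 0$, hence $\mathfrak{m}_\nu \supset \mathfrak{m} R_\nu \neq 0$, and so $\nu$ is nontrivial with $\rk \nu \geq 1$; combined with the first inequality this gives $\trdeg_\kk \nu \leq \dim R - 1$.

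No step in this argument presents any real obstacle: the proposition is a bookkeeping corollary of Cohen's theorem, and the only point that warrants a second look is the vanishing $\trdeg_R R_\nu = 0$, which reduces to the observation that $R \subset R_\nu \subset K = \operatorname{Frac}(R)$ introduces no new transcendentals.
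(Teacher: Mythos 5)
Your proof is correct and is precisely the argument the paper intends: the proposition is presented as following ``directly from the above theorem,'' i.e.\ from Cohen's dimension inequality applied to the extension $R\subset R_{\nu}$ with $\mathfrak{q}=\mathfrak{m}_{\nu}$ and $\mathfrak{p}=\mathfrak{m}$, with exactly the four identifications you list (the key ones being $\height\mathfrak{m}_{\nu}=\dim R_{\nu}=\rk\nu$ and $\trdeg_{R}R_{\nu}=0$ since $R$ and $R_{\nu}$ share the fraction field). The only quibble is your treatment of $\dim R=0$: that case is not vacuous but degenerate (a trivial valuation has $\trdeg_{\kk}\nu=0\not\leqslant-1$), though the proposition is of course only ever invoked for $\dim R\geqslant1$, where your observation that $\rk\nu\geqslant1$ settles the ``in particular'' clause.
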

\begin{defn}
Let $\left(R,\mathfrak{m},\kk\right)$ be a local Noetherian domain
and let $\nu$ be a valuation with center $\mathfrak{m}$ on $R$.
If $\trdeg_{\kk}\nu=\dim R-1$ then we say that $\nu$ is \emph{divisorial}
with respect to $R$ (or is a \emph{prime divisor} for $R$). 
\end{defn}

\section{Integral closure of ideals\label{sec:Integral-closure}}

Let $\mathfrak{a}$ be an ideal in a ring $R$. We say that an element
$x\in R$ is \emph{integral over} $\mathfrak{a}$ if there exist $N\geqslant1$
and $a_{1}\in\mathfrak{a},a_{2}\in\mathfrak{a}^{2},\ldots,a_{N}\in\mathfrak{a}^{N}$
such that
\[
x^{N}+a_{1}x^{N-1}+\cdots+a_{N}=0.
\]
The set of elements of $R$ that are integral over $\mathfrak{a}$
is called the \emph{integral closure} of $\mathfrak{a}$ and is denoted
$\overline{\mathfrak{a}}$. It turns out that the integral closure
of an ideal is always an ideal.

Next theorem is the celebrated valuative criterion of integral dependence.
\begin{thm}[{\cite[Proposition 6.8.4]{HS06a}}]
\label{thm:valuative criterion}Let $\mathfrak{a}$ be an ideal in
an integral Noetherian domain $R$. Let $\mathcal{V}$ be the set
of all discrete valuation rings $V$ of rank one between $R$ and
its field of fractions for which the maximal ideal of $V$ contracts
to a maximal ideal of $R$. Then
\[
\overline{\mathfrak{a}}=\bigcap_{V\in\mathcal{V}}\mathfrak{a}V\cap R.
\]
\end{thm}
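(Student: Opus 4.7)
I would split the proof into the two inclusions of the claimed equality. For the easy direction $\overline{\mathfrak{a}}\subseteq\bigcap_{V\in\mathcal{V}}(\mathfrak{a}V\cap R)$, given $x\in\overline{\mathfrak{a}}$ with an integral relation $x^{n}+c_{1}x^{n-1}+\cdots+c_{n}=0$, where $c_{i}\in\mathfrak{a}^{i}$, and any $V\in\mathcal{V}$ with valuation $\nu$, I would apply $\nu$ together with the ultrametric inequality. Setting $\nu(\mathfrak{a}):=\min_{0\neq a\in\mathfrak{a}}\nu(a)\geqslant 0$ and using $\nu(c_{i})\geqslant i\,\nu(\mathfrak{a})$, the relation yields $n\nu(x)\geqslant\min_{1\leqslant i\leqslant n}\bigl(i\,\nu(\mathfrak{a})+(n-i)\nu(x)\bigr)$, which forces $\nu(x)\geqslant\nu(\mathfrak{a})$, hence $x\in\mathfrak{a}V$.

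For the reverse direction I would argue by contraposition, assuming $x\notin\overline{\mathfrak{a}}$ (so in particular $x\neq 0$) and producing some $V\in\mathcal{V}$ with $x\notin\mathfrak{a}V$. The plan revolves around the blow-up algebra $S:=R[\mathfrak{a}/x]=R[a_{1}/x,\ldots,a_{n}/x]\subseteq\mathrm{Frac}(R)$, where $\mathfrak{a}=(a_{1},\ldots,a_{n})$. The key algebraic fact is the equivalence
\[
x\in\overline{\mathfrak{a}}\ \Longleftrightarrow\ (a_{1}/x,\ldots,a_{n}/x)\,S=S.
\]
The direction $\Rightarrow$ comes from dividing the integral relation by $x^{n}$; the converse follows by clearing denominators in any witness of $1\in(a_{1}/x,\ldots,a_{n}/x)S$ to recover an integral equation for $x$ over $\mathfrak{a}$. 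Under the hypothesis $x\notin\overline{\mathfrak{a}}$ the ideal $(a_{1}/x,\ldots,a_{n}/x)S$ is therefore proper, and I would pick a maximal ideal $\mathfrak{n}\subset S$ containing it.

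A small but important observation is that the contraction $\mathfrak{n}\cap R$ is automatically a maximal ideal of $R$. Indeed every monomial in the generators $a_{i}/x$ of positive degree lies in $(a_{1}/x,\ldots,a_{n}/x)S$, so every element of $S$ is congruent modulo this ideal to an element of $R$; hence $S/(a_{1}/x,\ldots,a_{n}/x)S$ is a quotient of $R$, and the further quotient $S/\mathfrak{n}$, being a field, realises $\mathfrak{n}\cap R$ as a maximal ideal. With this in hand the task reduces to producing a rank-one DVR $V$ of $\mathrm{Frac}(R)$ that dominates the Noetherian local domain $S_{\mathfrak{n}}$. Such a $V$ is obtained by passing to the normalization of $S_{\mathfrak{n}}$, which is a Krull domain by Mori--Nagata, and localizing at a height-one prime lying over $\mathfrak{n}S_{\mathfrak{n}}$. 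Then $V\in\mathcal{V}$ (its centre on $R$ is the maximal ideal $\mathfrak{n}\cap R$), each $a_{i}/x\in\mathfrak{n}\subseteq\mathfrak{m}_{V}$, so $\nu_{V}(a_{i})>\nu_{V}(x)$ for every $i$, and consequently $x\notin\mathfrak{a}V$, as required.

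The main obstacle, as I view it, is the equivalence ``$x\in\overline{\mathfrak{a}}$ iff $(a_{1}/x,\ldots,a_{n}/x)S=S$'': despite its combinatorial flavour it carries the whole geometric content of the theorem, identifying integral dependence with a condition on the chart of $\mathrm{Bl}_{\mathfrak{a}}\,\mathrm{Spec}(R)$ centred at $x$. A secondary technicality is refining a dominating valuation ring to one of rank one: existence of any dominating valuation ring is Zorn-soft, but cutting down to a discrete rank-one valuation requires Mori--Nagata, or equivalently the standard statement that every Noetherian local domain is dominated by a one-dimensional regular local ring.
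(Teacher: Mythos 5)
The paper itself does not prove this theorem: it is imported verbatim from \cite{HS06a} (Proposition 6.8.4), so your attempt has to be measured against the standard argument given there, and your skeleton is essentially that argument. The easy inclusion via the valuation inequality is correct; the equivalence $x\in\overline{\mathfrak{a}}\Leftrightarrow(a_{1}/x,\ldots,a_{n}/x)S=S$ for $S=R[\mathfrak{a}/x]$ (dividing the integral equation by $x^{n}$ one way, clearing denominators the other way) is correct; and your observation that any maximal ideal $\mathfrak{n}\supseteq(\mathfrak{a}/x)S$ of $S$ contracts to a maximal ideal of $R$, because $R\to S/(\mathfrak{a}/x)S$ is surjective, is exactly the point that makes the valuation you are after land in $\mathcal{V}$.

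The genuine gap is the final step, where the rank-one DVR is produced. You propose to normalize $S_{\mathfrak{n}}$ (Mori--Nagata) and localize at a height-one prime lying over $\mathfrak{n}S_{\mathfrak{n}}$; such a prime need not exist. For instance, take $R=\kk[[u,v]]$, $\mathfrak{a}=(u^{2},v^{2})$, $x=u\notin\overline{\mathfrak{a}}=(u,v)^{2}$: then $S=R[v^{2}/u]\simeq R[T]/(uT-v^{2})$ and $S_{\mathfrak{n}}$ is the local ring at the origin of the hypersurface $uT=v^{2}$, a two-dimensional normal domain (a Cohen--Macaulay hypersurface singular only at the closed point); it equals its own normalization, and the only prime lying over $\mathfrak{n}S_{\mathfrak{n}}$ is $\mathfrak{n}S_{\mathfrak{n}}$ itself, of height two, so your recipe yields no valuation at all. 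What is needed is the standard domination theorem: every Noetherian local domain $(A,\mathfrak{m})$ is dominated by a rank-one discrete valuation ring with the same field of fractions. Its proof requires a reduction to dimension one \emph{before} normalizing: pass to $B=A[\mathfrak{m}/u]$ for a suitable $u\in\mathfrak{m}$ (one checks such $u$ exists with $uB\neq B$, so that $\mathfrak{m}B=uB$ is a proper principal ideal), localize at a prime minimal over $uB$ --- of height one by Krull's principal ideal theorem --- and then apply Krull--Akizuki (not Mori--Nagata) to the resulting one-dimensional Noetherian local domain; localizing its integral closure at a maximal ideal lying over gives the DVR. Applied to $A=S_{\mathfrak{n}}$, this $V$ has center $\mathfrak{n}$ on $S$, so each $a_{i}/x\in\mathfrak{m}_{V}$ and $x\notin\mathfrak{a}V$, and its center on $R$ is the maximal ideal $\mathfrak{n}\cap R$. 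Your closing sentence gestures at exactly this theorem, and citing it would close the gap; but as written your derivation of it fails, and it is not interchangeable with Mori--Nagata in the way you suggest.
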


\section{Quadratic transformation of a ring\label{sec:Quadratic-transformation-of}}
\begin{defn}
Let $\left(R,\mathfrak{m}\right)$ be a local regular domain and let
$x\in\mathfrak{m}\setminus\mathfrak{m}^{2}$. Set $S=R\left[\frac{\mathfrak{m}}{x}\right]$
and let $\mathfrak{p}$ be a prime ideal in $S$ containing $x$.
Then the ring $S_{\mathfrak{p}}$ is called a \emph{(first) quadratic
transform} of $R$. If $\nu$ is a valuation with center $\mathfrak{m}$
on $R$ and $xR_{\nu}=\mathfrak{m}R_{\nu}$ then $S_{\mathfrak{p}}$,
where $\mathfrak{p}:=R\cap\mathfrak{m}_{\nu}$, is called a \emph{(first)
quadratic transform }of $R$ \emph{along }$\nu$.
\end{defn}
\begin{rem}
Keep the notations from the above definition. Then $xS=\mathfrak{m}S$
and for any $k\in\mathbb{N}$, $x^{k}S\cap R=\mathfrak{m}^{k}S\cap R=\mathfrak{m}^{k}$.
Indeed, the equalities $xS=\mathfrak{m}S$, $x^{k}S=\mathfrak{m}^{k}S$
and the inclusion $\mathfrak{m}^{k}\subset\mathfrak{m}^{k}S\cap R$
are clear. Take $r\in\mathfrak{m}^{k}S\cap R$. Then there exist $l\geqslant0$
and $a_{j}\in\mathfrak{m}^{k+j}$, $j=0,\ldots,l$, such that
\[
a_{0}+\frac{a_{1}}{x}+\cdots+\frac{a_{l}}{x^{l}}=r.
\]
Thus $x^{l}r\in\mathfrak{m}^{k+l}$. On the other hand, $\left(R,\mathfrak{m}\right)$
is a local regular domain, hence the associated graded ring $\gr_{\mathfrak{m}}R$
is an integral domain (as isomorphic to the ring o polynomials $\frac{R}{\mathfrak{m}}\left[Y_{1},\ldots,Y_{n}\right]$).
We have $\left(x^{l}+\mathfrak{m}^{l+1}\right)\cdot\left(r+\mathfrak{m}^{k}\right)=x^{l}r+\mathfrak{m}^{k+l}$,
which is zero in $\gr_{\mathfrak{m}}R$. Consequently, since $x^{l}\notin\mathfrak{m}^{l+1}$
we must have $r\in\mathfrak{m}^{k}$. 
\end{rem}
\begin{rem}
It is clear from the definition, that if $\left(T,\mathfrak{n}\right)$
is a quadratic transformation of $\left(R,\mathfrak{m}\right)$ along
$\nu$ then $\nu$ has center $\mathfrak{n}$ on $T$.
\end{rem}
\begin{prop}
\label{prop:1}Let $\left(R,\mathfrak{m}\right)$ be a local regular
domain of dimension $n>1$. Set $x_{1},\ldots,x_{n}$ as the generators
of $\mathfrak{m}$. Let $R\left[Y\right]$, where $Y=\left(Y_{2},\ldots,Y_{n}\right)$,
be a polynomial ring in $n-1$ variables over $R$. If $\varphi\colon R\left[Y\right]\to S:=R\left[\frac{x_{2}}{x_{1}},\ldots,\frac{x_{n}}{x_{1}}\right]$
is an $R$-homomorphism given by $\varphi\left(Y_{j}\right):=x_{j}/x_{1}$,
$j=2,\ldots,n$, then $\ker\varphi=\left(x_{1}Y_{2}-x_{2},\ldots,x_{1}Y_{n}-x_{n}\right)R\left[Y\right].$
\end{prop}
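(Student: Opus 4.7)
The inclusion $(x_1 Y_2 - x_2, \ldots, x_1 Y_n - x_n) R[Y] =: I \subset \ker\varphi$ is immediate, since each generator is sent to $x_1(x_j/x_1) - x_j = 0$. For the reverse inclusion, the plan is to pass to the localization at $x_1$. Since the relations $x_1 Y_j = x_j$ are solvable as $Y_j = x_j/x_1$ after inverting $x_1$, one obtains a canonical isomorphism $R[Y][x_1^{-1}] / I \cdot R[Y][x_1^{-1}] \cong R[x_1^{-1}]$. The map $\varphi$ factors through $R[x_1^{-1}]$, because $S \subset R[x_1^{-1}]$ inside the fraction field of $R$, so $\ker\varphi = I\cdot R[Y][x_1^{-1}] \cap R[Y]$. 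The desired equality $\ker\varphi = I$ is therefore equivalent to $x_1$ being a non-zero-divisor on $R[Y]/I$.

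The main step, then, is to prove $(I : x_1) = I$. Suppose $x_1 g = \sum_{j=2}^n h_j (x_1 Y_j - x_j)$ in $R[Y]$. Reducing modulo $x_1$ gives the syzygy $\sum_{j=2}^n \overline{x_j}\,\overline{h_j} = 0$ in $(R/x_1 R)[Y]$. Since $R$ is regular local of dimension $n$ and $x_1 \in \mathfrak{m}\setminus\mathfrak{m}^2$, the quotient $R/x_1 R$ is regular local of dimension $n-1$ with $\overline{x_2}, \ldots, \overline{x_n}$ a regular (hence Koszul) sequence; Koszul exactness then forces $(\overline{h_j})$ to be a trivial syzygy:
\[
\overline{h_j} = \sum_{k>j} \overline{C_{jk}}\,\overline{x_k} - \sum_{k<j} \overline{C_{kj}}\,\overline{x_k}
\]
for some $\overline{C_{jk}} \in (R/x_1 R)[Y]$. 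Lifting the $C_{jk}$ to $R[Y]$ and substituting back, the symmetric $C_{jk} x_j x_k$ contribution drops out by antisymmetry and the remainder acquires a common factor of $x_1$; cancelling $x_1$ (which is regular in the polynomial ring $R[Y]$) expresses $g$ as an $R[Y]$-combination of the original generators together with the Koszul-like elements $x_k Y_j - x_j Y_k = Y_k(x_1 Y_j - x_j) - Y_j(x_1 Y_k - x_k) \in I$. Hence $g \in I$.

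The finishing argument is short. Given $f \in \ker\varphi$ of total $Y$-degree $d$, applying the congruence $x_1 Y_j \equiv x_j \pmod{I}$ monomial by monomial yields $x_1^d f \equiv r \pmod{I}$ for some $r \in R$; since $\varphi(x_1^d f) = 0$ and $\varphi|_R$ is the identity, $r = 0$, so $x_1^d f \in I$. Iterating the non-zero-divisor property $d$ times cancels the powers of $x_1$ to give $f \in I$, completing the proof. The main obstacle is the Koszul-syzygy bookkeeping of the middle paragraph; the two reductions that bracket it are formal.
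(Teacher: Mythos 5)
Your proof is correct, but its decisive step is genuinely different from the paper's. Both arguments share the outer reduction: the relations $x_1Y_j\equiv x_j \pmod{I}$ are used to show that any $f\in\ker\varphi$ satisfies $x_1^Nf\in I$ for some $N$ (the paper gets this by dividing with remainder in $S[Y]$ and clearing denominators; your monomial-by-monomial congruence in the last paragraph achieves the same thing, and in fact makes your opening localization paragraph logically redundant, though it motivates why $(I:x_1)=I$ is the right target). Where you diverge is in how $x_1^Nf\in I$ forces $f\in I$. The paper proves that $I$ is \emph{prime}: it passes to $R[[Y]]$, notes that $x_1Y_2-x_2,\ldots,x_1Y_n-x_n$ extends to a regular system of parameters there, so the quotient is a regular local domain, and then descends primality to $R[Y]$; since $x_1\notin I$, primality finishes the proof. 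You instead prove the weaker but sufficient statement that $x_1$ is a non-zero-divisor on $R[Y]/I$, by reducing a relation $x_1g=\sum_j h_j(x_1Y_j-x_j)$ modulo $x_1$ to a syzygy of the regular sequence $\overline{x_2},\ldots,\overline{x_n}$, invoking Koszul exactness to make it a combination of trivial syzygies, and back-substituting to land $g$ in $I$ via the elements $x_kY_j-x_jY_k=Y_k(x_1Y_j-x_j)-Y_j(x_1Y_k-x_k)$. Your route stays entirely inside $R[Y]$ and avoids both the completion and the descent of primality from $R[[Y]]$ to $R[Y]$, at the cost of the syzygy bookkeeping (which checks out, including the antisymmetry cancellation). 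The one point you leave implicit is that $\overline{x_2},\ldots,\overline{x_n}$ remains a regular sequence after passing from $R/x_1R$ to $(R/x_1R)[Y]$ — this is where Koszul exactness is actually applied; it is standard, since the polynomial ring is free over the base, but it deserves a sentence.
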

\begin{proof}
Take $f\in\ker\varphi$. Using successive divisions with remainder
we may write $f$ in the form
\[
f\left(Y\right)=A_{2}\cdot\left(Y_{2}-\frac{x_{2}}{x_{1}}\right)+\cdots+A_{n}\cdot\left(Y_{n}-\frac{x_{n}}{x_{1}}\right)+B,
\]
where $A_{2},\ldots,A_{n}\in S\left[Y\right]$, $B\in R\left[\frac{x_{2}}{x_{1}},\ldots,\frac{x_{n}}{x_{1}}\right]$.
We must have $B=0$, since $f\in\ker\varphi$. There exists $N$ such
that
\begin{equation}
x_{1}^{N}f\left(Y\right)=A_{2}^{'}\cdot\left(x_{1}Y_{2}-x_{2}\right)+\cdots+A_{n}^{'}\cdot\left(x_{1}Y_{n}-x_{n}\right),\label{eq:1}
\end{equation}
where $A_{2}^{'},\ldots,A_{n}^{'}\in R\left[Y\right]$.

Now, observe that $R\left[\left[Y\right]\right]$ is a regular local
ring of dimension $2n-1$ and $x_{1}Y_{2}-x_{2},\ldots,x_{1}Y_{n}-x_{n},x_{1},Y_{2},\ldots,Y_{n}$
is its regular system of parameters \cite[Theorems 15.4, 19.5]{Matsumura1989}.
Thus $$R\left[\left[Y\right]\right]/\left(x_{1}Y_{2}-x_{2},\ldots,x_{1}Y_{n}-x_{n}\right)$$
is a regular local domain and, consequently $\left(x_{1}Y_{2}-x_{2},\ldots,x_{1}Y_{n}-x_{n}\right)R\left[\left[Y\right]\right]$
is a prime ideal. Thus $\left(x_{1}Y_{2}-x_{2},\ldots,x_{1}Y_{n}-x_{n}\right)R\left[Y\right]$
is also prime. Moreover, this ideal does not contain $x_{1}$ since
$x_{1},\ldots,x_{n}$ minimally generates $\mathfrak{m}$. This and
(\ref{eq:1}) gives $f\in\left(x_{1}Y_{2}-x_{2},\ldots,x_{1}Y_{n}-x_{n}\right)R\left[Y\right]$.
\end{proof}
\begin{prop}
\label{prop:przeksztalcenia kwadratowe}Under the notations from Proposition
\ref{prop:1} we have:

\begin{enumerate}[label=\arabic{enumi})]
\item $S$ is regular,\label{enu:jeden}
\item \label{enu:dwa}if $\mathfrak{p}\subset S$ is a prime ideal containing
$x_{1}$ then $S_{\mathfrak{p}}$ is a regular local ring and 
\[
\trdeg_{R/\mathfrak{m}}S_{\mathfrak{p}}/\mathfrak{p}S_{\mathfrak{p}}=\dim R-\dim S_{\mathfrak{p}},
\]
\item \label{enu:trzy}if $\mathfrak{p}=\mathfrak{m}_{\nu}\cap S$, where
$\nu$ is a valuation with center $\mathfrak{m}$ on $R$ such that
$\nu\left(x_{1}\right)\leqslant\nu\left(x_{j}\right)$, $j=2,\ldots,n$,
then 
\[
\trdeg_{R/\mathfrak{m}}\nu-\trdeg_{S_{\mathfrak{p}}/\mathfrak{p}S_{\mathfrak{p}}}\nu=\dim R-\dim S_{\mathfrak{p}}.
\]
\end{enumerate}
\end{prop}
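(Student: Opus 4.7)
The plan is to exploit the presentation provided by Proposition \ref{prop:1}, namely $S \cong R[Y_2,\ldots,Y_n]/I$ with $I=(x_1Y_2-x_2,\ldots,x_1Y_n-x_n)$, and to observe that since $x_j \equiv x_1 Y_j \pmod{I}$ we have $\mathfrak{m}S = x_1 S$ and
\[
S/x_1 S \;\cong\; R[Y]/(x_1,\ldots,x_n,\;x_1Y_2-x_2,\ldots,x_1Y_n-x_n) \;=\; (R/\mathfrak{m})[Y_2,\ldots,Y_n].
\]
In particular, $x_1 S$ is a prime ideal of $S$, so $x_1$ is a nonzero-divisor. The whole argument then splits on whether the given prime $\mathfrak{p}$ contains $x_1$.

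For (1), if $x_1 \notin \mathfrak{p}$ then $S_{\mathfrak{p}}$ is a localization of $S[x_1^{-1}]$; since $Y_j = x_j/x_1 \in R[x_1^{-1}]$, we have $S[x_1^{-1}] = R[x_1^{-1}]$, which is regular because $R$ is, and hence so is any of its localizations. The remaining case $x_1 \in \mathfrak{p}$ is exactly what (2) addresses.

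For (2), set $\overline{\mathfrak{p}} := \mathfrak{p}/x_1 S$, a prime of the polynomial ring $(R/\mathfrak{m})[Y]$. Then $S_{\mathfrak{p}}/x_1 S_{\mathfrak{p}} \cong (R/\mathfrak{m})[Y]_{\overline{\mathfrak{p}}}$ is a regular local ring. Since $x_1 \in \mathfrak{p}S_{\mathfrak{p}}$ is a regular element whose quotient is regular, $S_{\mathfrak{p}}$ is itself regular, with $\dim S_{\mathfrak{p}} = 1 + \height\overline{\mathfrak{p}}$, by the standard criterion. The dimension formula for polynomial rings over the field $R/\mathfrak{m}$ gives $\height\overline{\mathfrak{p}} + \trdeg_{R/\mathfrak{m}}(R/\mathfrak{m})[Y]/\overline{\mathfrak{p}} = n-1$; since the residue field at $\mathfrak{p}S_{\mathfrak{p}}$ equals that at $\overline{\mathfrak{p}}$ and $n = \dim R$, this rearranges to $\trdeg_{R/\mathfrak{m}} S_{\mathfrak{p}}/\mathfrak{p}S_{\mathfrak{p}} = \dim R - \dim S_{\mathfrak{p}}$.

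Part (3) is then a formality. The hypothesis $\nu(x_1)\leqslant\nu(x_j)$ gives $x_j/x_1 \in R_{\nu}$, so $S\subset R_{\nu}$, and $\mathfrak{p}=\mathfrak{m}_{\nu}\cap S$ is the center of $\nu$ on $S$; it contains $x_1$ since $\nu(x_1)>0$, so (2) applies. Transitivity of transcendence degree in the tower $R/\mathfrak{m}\subset S_{\mathfrak{p}}/\mathfrak{p}S_{\mathfrak{p}}\subset R_{\nu}/\mathfrak{m}_{\nu}$ yields
\[
\trdeg_{R/\mathfrak{m}}\nu - \trdeg_{S_{\mathfrak{p}}/\mathfrak{p}S_{\mathfrak{p}}}\nu \;=\; \trdeg_{R/\mathfrak{m}} S_{\mathfrak{p}}/\mathfrak{p}S_{\mathfrak{p}},
\]
and (2) identifies the right-hand side with $\dim R - \dim S_{\mathfrak{p}}$. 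The only nonroutine step I foresee is the regularity of $S_{\mathfrak{p}}$ in (2), which rests on the standard fact that a Noetherian local ring is regular whenever it admits a regular element in its maximal ideal whose quotient is a regular local ring; this is where the identification of $S/x_1 S$ with a polynomial ring over a field does the real work.
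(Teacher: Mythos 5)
Your proposal is correct and follows essentially the same route as the paper: the presentation $S\simeq R[Y]/\mathfrak{b}$ from Proposition \ref{prop:1}, the identification $S/x_{1}S\simeq\frac{R}{\mathfrak{m}}[Y]$, the case split on whether $x_{1}\in\mathfrak{p}$, the dimension formula for affine domains over a field, and additivity of transcendence degree for part \ref{enu:trzy}. The only cosmetic difference is that where you invoke the standard criterion (a nonzerodivisor in the maximal ideal with regular quotient forces regularity), the paper explicitly lifts a regular system of parameters of $(S/x_{1}S)_{\mathfrak{p}^{\star}}$ and adjoins $x_{1}$ to it.
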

\begin{proof}
Let $\mathfrak{p}\subset S$ be a prime ideal. We have $R\subset S\subset R_{x_{1}}$,
so $R_{x_{1}}=S_{x_{1}}$. Thus, if $x_{1}\notin\mathfrak{p}$ then
\[
S_{\mathfrak{p}}=\left(S_{x_{1}}\right)_{\mathfrak{p}S_{x_{1}}}=\left(R_{x_{1}}\right)_{\mathfrak{p}R_{x_{1}}}=R_{\mathfrak{p}},
\]
hence $S_{\mathfrak{p}}$ is a regular local ring.

Now, assume that $x_{1}\in\mathfrak{p}$. Let $R\left[Y\right]$,
$Y=\left(Y_{2},\ldots,Y_{n}\right)$, be a polynomial ring. Put $\mathfrak{b}:=\left(x_{1}Y_{2}-x_{2},\ldots,x_{1}Y_{n}-x_{n}\right)R\left[Y\right]$.
We have $S\simeq R\left[Y\right]/\mathfrak{b}$ by Proposition \ref{prop:1}.
Let $\mathfrak{p}^{\star}:=\mathfrak{p}/x_{1}S$, $S^{\star}:=S/x_{1}S$.
Since $\mathfrak{b}\subset\mathfrak{m}R\left[Y\right]$ and $x_{1}S=\mathfrak{m}S$,
\begin{equation}
S^{\star}=\frac{S}{\mathfrak{m}S}\simeq\frac{R\left[Y\right]}{\mathfrak{m}R\left[Y\right]}\simeq\frac{R}{\mathfrak{m}}\left[Y\right].\label{eq:S gwiazdka}
\end{equation}
The ring $S^{\star}$ is regular, as a ring of polynomials over a
field, thus there exist $y_{2},\ldots,y_{k+1}\in S$, such that $\mathfrak{p}^{\star}S_{\mathfrak{p}^{\star}}^{\star}=\left(y_{2},\ldots,y_{k+1}\right)S_{\mathfrak{p}^{\star}}^{\star}$
and $\height\mathfrak{p}^{\star}=k$. Moreover 
\[
\dim S_{\mathfrak{p}}=\height \mathfrak{p}S_{\mathfrak{p}}=\height\mathfrak{p}S=\height\mathfrak{p}^{\star}+1=k+1
\]
and $\mathfrak{p}S_{\mathfrak{p}}=\left(x_{1},y_{2},\ldots,y_{k+1}\right)S_{\mathfrak{p}}$.
Consequently, $S_{\mathfrak{p}}$ is a regular local ring. This proves
\ref{enu:jeden}.

Using the identifications (\ref{eq:S gwiazdka}), we have
\begin{alignat*}{1}
\trdeg_{R/\mathfrak{m}}\frac{S_{\mathfrak{p}}}{\mathfrak{p}S_{\mathfrak{p}}}= & \trdeg_{R/\mathfrak{m}}\left(\frac{\frac{R}{\mathfrak{m}}\left[Y\right]}{\mathfrak{p}^{\star}\frac{R}{\mathfrak{m}}\left[Y\right]}\right)_{0}=\dim\frac{\frac{R}{\mathfrak{m}}\left[Y\right]}{\mathfrak{p}^{\star}\frac{R}{\mathfrak{m}}\left[Y\right]}\\
= & \dim\frac{R}{\mathfrak{m}}\left[Y\right]-\height\mathfrak{p}^{\star}=n-1-k=\dim R-\dim S_{\mathfrak{p}}.
\end{alignat*}
This gives \ref{enu:dwa}.

Since $R/\mathfrak{m}\subset S_{\mathfrak{p}}/\mathfrak{p}S_{\mathfrak{p}}\subset R_{\nu}/\mathfrak{m}_{\nu}$,
the proof of \ref{enu:trzy} follows from \ref{enu:dwa} and from
the equality
\[
\trdeg_{R/\mathfrak{m}}R_{\nu}/\mathfrak{m}_{\nu}=\trdeg_{S_{\mathfrak{p}}/\mathfrak{p}S_{\mathfrak{p}}}R_{\nu}/\mathfrak{m}_{\nu}+\trdeg_{R/\mathfrak{m}}S_{\mathfrak{p}}/\mathfrak{p}S_{\mathfrak{p}}.
\]
\end{proof}
\begin{lem}
\label{lem:wybor x}Let $\left(T,\mathfrak{n}\right)$ be a quadratic
transformation of $R$. Then
\begin{enumerate}[label=\arabic{enumi})]
\item $\mathfrak{n}^{k}\cap R=\mathfrak{m}^{k}$ for any $k\in\mathbb{N},$
\item if $xT=\mathfrak{m}T$ for some $x\in R$, then $x\in\mathfrak{m}\setminus\mathfrak{m}^{2}$
and $T=S_{\mathfrak{p}}$, where $S:=R\left[\frac{\mathfrak{m}}{x}\right]$
and $\mathfrak{p}:=S\cap\mathfrak{n}$.
\end{enumerate}
\end{lem}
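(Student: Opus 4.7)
The easy inclusion $\mathfrak{m}^k\subset \mathfrak{n}^k\cap R$ will follow from $\mathfrak{m}\subset\mathfrak{n}$. For the reverse, the plan is to realize $T$ as $S'_{\mathfrak{p}'}$ via the definition of quadratic transform, with $S'=R[\mathfrak{m}/x']$ for some $x'\in\mathfrak{m}\setminus\mathfrak{m}^2$ and a prime $\mathfrak{p}'\ni x'$, so that $\mathfrak{m}T = x'T$. I will then contract from $T$ to $S'$, aiming for $(x')^kT\cap S' = (x')^kS'$, and combine this with the preceding Remark's identity $(x')^kS'\cap R=\mathfrak{m}^k$. For the contraction step, I will invoke the identification $S'/(x')S'\cong (R/\mathfrak{m})[Y_2,\dots,Y_n]$ from the proof of Proposition \ref{prop:przeksztalcenia kwadratowe} to conclude that $(x')S'$ is prime in the regular (hence Cohen--Macaulay) domain $S'$; this implies $(x')^kS'$ has no embedded primes, and its unique associated prime $(x')S'\subset\mathfrak{p}'$ is disjoint from $S'\setminus\mathfrak{p}'$, making every such denominator a nonzerodivisor modulo $(x')^kS'$.

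\textbf{Plan for part (2), preparation.} Given part (1) with $k=1$, I will first derive $x\in\mathfrak{m}T\cap R=\mathfrak{m}$ from $xT=\mathfrak{m}T$. To show $x\notin\mathfrak{m}^2$, I will argue by contradiction: $x\in\mathfrak{m}^2$ would give $\mathfrak{m}T=xT\subset\mathfrak{m}^2T=\mathfrak{m}\cdot(\mathfrak{m}T)$, and Nakayama on the finitely generated $T$-module $\mathfrak{m}T$ (with $\mathfrak{m}\subset\mathfrak{n}$ inside the Jacobson radical of $T$) will force $\mathfrak{m}T=0$, contradicting $R\hookrightarrow T$. With $x\in\mathfrak{m}\setminus\mathfrak{m}^2$ in hand, $S=R[\mathfrak{m}/x]$ is legitimately defined. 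Each $m\in\mathfrak{m}$ lies in $\mathfrak{m}T=xT$, hence $m/x\in T$ (working in the common fraction field), giving $S\subset T$; then $\mathfrak{p}:=S\cap\mathfrak{n}$ is a prime of $S$ containing $x$, and $S_\mathfrak{p}\subset T$ follows by the universal property of localization.

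\textbf{Plan for part (2), main inclusion and obstacle.} The remaining and principal task is $T\subset S_\mathfrak{p}$. My plan is to write $T=S'_{\mathfrak{p}'}$ via the definition and to bridge the two presentations through the unit $w:=x'/x$. By the Remark applied to $S$, $\mathfrak{m}S=xS$, so $x'\in\mathfrak{m}\subset xS$ will give $w\in S$; the hypothesis $xT=x'T$ makes $w$ a unit in $T$, hence $w\notin\mathfrak{n}$ and $w\notin\mathfrak{p}$. Consequently each generator $x_i/x'=(x_i/x)\cdot w^{-1}$ of $S'$ will lie in $S_\mathfrak{p}$, yielding $S'\subset S_\mathfrak{p}$. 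For any $b\in S'\setminus\mathfrak{p}'$, a common-denominator rewriting will express $b=a/w^N$ with $a\in S$; since $b,w\notin\mathfrak{n}$, $a=bw^N\notin\mathfrak{n}$, so $a\notin\mathfrak{p}$ and $b$ will be a unit in $S_\mathfrak{p}$, closing $T=S'_{\mathfrak{p}'}\subset S_\mathfrak{p}$. The principal obstacle is securing simultaneously that $w\in S$ (via $\mathfrak{m}S=xS$) and $w\notin\mathfrak{p}$ (via $xT=x'T$); these twin facts are the linchpin that reconciles the two distinct quadratic-transform descriptions of $T$.
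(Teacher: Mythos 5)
Your treatment of part (2) is correct and follows essentially the same route as the paper: the paper bridges the two presentations of $T$ through the common localization $Q:=S_{x'/x}=S'_{x/x'}$ and compares $Q_{\mathfrak{p}'Q}$ with $Q_{\mathfrak{p}Q}$, whereas you manipulate the unit $w=x'/x$ directly; these are the same idea. Your Nakayama argument for $x\notin\mathfrak{m}^{2}$ is in fact an improvement, since it avoids invoking part (1) with $k=2$ (the paper's proof does invoke it there, via $\mathfrak{m}=xT\cap R\subset\mathfrak{m}^{2}T\cap R\subset\mathfrak{n}^{2}\cap R=\mathfrak{m}^{2}$).

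Part (1) is where the problem lies, and it is a problem shared with the statement itself. Your contraction argument correctly yields $(x')^{k}T\cap S'=(x')^{k}S'$ and hence $\mathfrak{m}^{k}T\cap R=(x')^{k}T\cap R=\mathfrak{m}^{k}$. But the claim to be proved concerns $\mathfrak{n}^{k}\cap R$, and $\mathfrak{n}^{k}=\mathfrak{p}'^{k}T$ contains $\mathfrak{m}^{k}T=(x')^{k}T$ strictly whenever $\mathfrak{p}'$ properly contains $x'S'$. Nothing in your plan supplies the missing inclusion $\mathfrak{n}^{k}\cap R\subset\mathfrak{m}^{k}T\cap R$, and it cannot be supplied: take $R=\kk[[x,y]]$, $x'=x$, $S'=R\left[y/x\right]$ and $\mathfrak{p}'=\left(x,y/x\right)S'$ (a maximal ideal containing $x$, so $T=S'_{\mathfrak{p}'}$ is a legitimate quadratic transform); then $y=x\cdot(y/x)\in\mathfrak{p}'^{2}\subset\mathfrak{n}^{2}$, while $y\notin\mathfrak{m}^{2}$, so the asserted equality $\mathfrak{n}^{k}\cap R=\mathfrak{m}^{k}$ already fails for $k=2$. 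The paper's own proof suffers from the same defect: the first inclusion $\mathfrak{m}^{k}\supset\mathfrak{n}^{k}\cap R$ in its displayed chain is never justified and is false in this example. The statement you actually prove, $\mathfrak{m}^{k}T\cap R=\mathfrak{m}^{k}$, is exactly what is needed at every place the lemma is later cited --- in the proof of part (2) and in the corollary on the order valuation, where $\mathfrak{n}=\mathfrak{m}T$ --- so the right repair is to restate part (1) in that form; but as a proof of part (1) as written, your argument has a genuine, and unbridgeable, gap.
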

\begin{proof}
By the definition of the quadratic transformation there exist $x'\in\mathfrak{m}\setminus\mathfrak{m}^{2}$
and a prime ideal $\mathfrak{p}'$ in $S':=R\left[\frac{\mathfrak{m}}{x'}\right]$
such that $x'\in\mathfrak{p}'$, $T=S'_{\mathfrak{p}'}$, $\mathfrak{n}=\mathfrak{p'}T$. 

We have
\[
\mathfrak{m}^{k}\supset\mathfrak{n}^{k}\cap R=\left(\mathfrak{n}^{k}\cap S'\right)\cap R\supset\mathfrak{p}'^{k}\cap R\supset x'^{k}S'\cap R=\mathfrak{m}^{k}S'\cap R=\mathfrak{m}^{k}.
\]
This gives the first assertion.

For the proof of the second one, observe that $x\in\mathfrak{m}T\cap R\subset\mathfrak{n}\cap R=\mathfrak{m}$.
Moreover, if $x\in\mathfrak{m}^{2}$, then $\mathfrak{m}=xT\cap R\subset\mathfrak{m}^{2}T\cap R\subset\mathfrak{n}^{2}\cap R=\mathfrak{m}^{2}$,
which is a contradiction. Thus $x\in\mathfrak{m}\setminus\mathfrak{m}^{2}$. 

Set $S:=R\left[\frac{\mathfrak{m}}{x}\right]$. Since $xT=\mathfrak{m}T=x'T$,
the element $x/x'$ is invertible in $T$. Hence $S\subset T$. Let
$\mathfrak{p}:=\mathfrak{n}\cap S$. Clearly $S_{\mathfrak{p}}\subset T$.
On the other hand, the localizations $S_{\frac{x'}{x}}$ and $S'_{\frac{x}{x'}}$
are equal; denote them by $Q$. Since $\mathfrak{p}'Q=\mathfrak{n}\cap Q$
and $\mathfrak{p}Q\subset\mathfrak{n}\cap Q$, 
\[
T=S'_{\mathfrak{p}'}=Q_{\mathfrak{p}'Q}=Q_{\mathfrak{n}\cap Q}\subset Q_{\mathfrak{p}Q}=S_{\mathfrak{p}}.
\]
\end{proof}
\begin{defn}
Let $\left(R,\mathfrak{m}\right)$ be a local regular domain and let
$f\in R$, $f\ne0$. Then we write $\ord_{R}f$ for the greatest $l\geqslant0$
such that $f\in\mathfrak{m}^{l}$. As usually, we also put $\ord_{R}0:=+\infty$.
We will call $\ord_{R}$ the \emph{order function} on $R$. Moreover,
for an ideal $\mathfrak{a}\subset R$ we put $\ord_{R}\mathfrak{a}:=\min_{f\in\mathfrak{a}}\ord_{R}f$.
\end{defn}
\begin{cor}
Let $\left(R,\mathfrak{m}\right)$ be a local regular domain. Then
the order function $\ord_{R}$ is a valuation of the field of fractions
of $R$. Moreover, if $x\in\mathfrak{m\setminus\mathfrak{m}}^{2}$,
$S:=R\left[\frac{\mathfrak{m}}{x}\right]$ and $\mathfrak{p}:=xS$,
then $T:=S_{\mathfrak{p}}$ is a valuation ring of the order function
on $R$.
\end{cor}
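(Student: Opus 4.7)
Part 1: Showing $\ord_R$ is a valuation. The ultrametric inequality $\ord_R(f+g)\geqslant \min\{\ord_R f,\ord_R g\}$ is immediate from the definition. For multiplicativity on $R$, the key fact (already recalled in the remark following the definition of quadratic transform) is that since $R$ is regular of dimension $n$, the associated graded ring $\gr_{\mathfrak{m}} R$ is isomorphic to a polynomial ring $(R/\mathfrak{m})[Y_1,\ldots,Y_n]$, hence an integral domain. Thus for $f,g \in R\setminus\{0\}$ with $a=\ord_R f$ and $b=\ord_R g$, the initial forms $f+\mathfrak{m}^{a+1}$ and $g+\mathfrak{m}^{b+1}$ are nonzero in $\gr_{\mathfrak{m}}R$, so their product $fg+\mathfrak{m}^{a+b+1}$ is nonzero, forcing $\ord_R(fg)=a+b$. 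Extending to $K:=\operatorname{Frac}R$ by $\ord_R(f/g):=\ord_R f-\ord_R g$ is then well defined and yields a $\mathbb{Z}$-valued valuation of $K$.

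Part 2: Identifying $T=R_{\ord_R}$. I first need workable descriptions of $S$ and of the ideal $\mathfrak{p}=xS$. Since $xS=\mathfrak{m}S$, the quotient $S/xS$ is a polynomial ring over $R/\mathfrak{m}$ by equation (\ref{eq:S gwiazdka}) in Proposition \ref{prop:przeksztalcenia kwadratowe}; in particular $xS$ is prime, so $T=S_{xS}$ is a well-defined localization. Next I would establish the following structural description: every $s\in S$ can be written in the form $s=g/x^l$ with $g\in\mathfrak{m}^l$, and moreover
\[
x^kS=\{s\in S:\ord_R s\geqslant k\},\qquad k\in\mathbb{N}.
\]
Both statements reduce to short bookkeeping, using that any element of $\mathfrak{m}^{l+k}$ divided by $x^{l+k}$ lies in $S$, since $\mathfrak{m}^{l+k}$ is generated by products of $l+k$ elements of $\mathfrak{m}$, each contributing a factor $m_i/x\in S$.

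Granted this, the inclusion $T\subseteq R_{\ord_R}$ is immediate: any $t\in T$ equals $s/s'$ with $s\in S$ and $s'\in S\setminus xS$, so $\ord_R s'=0$ and $\ord_R s\geqslant 0$. For the reverse inclusion, I would take $t=f/g\in K$ with $\ord_R t\geqslant 0$ and set $b:=\ord_R g$. Then both $f/x^b$ and $g/x^b$ lie in $S$ (using $\ord_R f\geqslant b$), and $g/x^b$ has order $0$, so lies outside $xS$. Thus $t=(f/x^b)/(g/x^b)\in S_{xS}=T$. The only mildly delicate point in the whole argument is this last trick of clearing the common power $x^b$ from numerator and denominator to land inside $S$; everything else reduces to multiplicativity of $\ord_R$ and to the explicit form of elements of $S$.
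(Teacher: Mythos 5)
Your proof is correct, but it is organized quite differently from the paper's. The paper never verifies multiplicativity of $\ord_{R}$ by hand: it first shows (via Proposition \ref{prop:przeksztalcenia kwadratowe}) that $xS$ is a height-one prime, so that $T=S_{xS}$ is a one-dimensional regular local ring and hence a discrete valuation ring with valuation $\ord_{T}$, and then uses Lemma \ref{lem:wybor x} (the contraction formula $\mathfrak{n}^{r}\cap R=\mathfrak{m}^{r}$) to conclude that $\ord_{T}$ restricts to $\ord_{R}$ on $R$; both assertions of the corollary then drop out at once. You instead prove the first assertion independently, from the fact that $\gr_{\mathfrak{m}}R$ is a domain (which gives $\ord_{R}(fg)=\ord_{R}f+\ord_{R}g$ and hence a well-defined extension to the fraction field), and then identify $T$ with $\left\{ t\in K:\ord_{R}t\geqslant0\right\}$ by an explicit description of $S$ as the set of fractions $g/x^{l}$ with $g\in\mathfrak{m}^{l}$, together with the identity $x^{k}S=\left\{ s\in S:\ord_{R}s\geqslant k\right\}$; your key steps (writing elements of $\mathfrak{m}^{l+k}/x^{l+k}$ as products of elements $m/x$, and clearing the common power $x^{b}$ in $f/g$) are all sound. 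Your route is more self-contained and makes the valuation ring of $\ord_{R}$ completely explicit, at the cost of redoing by hand some bookkeeping that the paper's cited results (regularity and dimension of $S_{\mathfrak{p}}$, the contraction lemma) already encapsulate; the paper's route is shorter given that machinery and, as a by-product, exhibits $T$ as a DVR before knowing anything about $\ord_{R}$ on the fraction field.
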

\begin{proof}
Since as in the proof of Proposition \ref{prop:przeksztalcenia kwadratowe},
$S/xS$ is isomorphic with the ring of polynomials with coefficients
in $R/\mathfrak{m}$, the ideal $xS$ is prime and $\height xS=1$.
Thus, again by Proposition \ref{prop:przeksztalcenia kwadratowe},
$T$ is a local regular one-dimensional domain. Hence it is a discrete
valuation ring of rank one with valuation given by $\ord_{T}$. By
Lemma \ref{lem:wybor x}, $\mathfrak{n}^{r}\cap R=\mathfrak{m}^{r}$,
so $\left(\mathfrak{n}^{r}\setminus\mathfrak{n}^{r+1}\right)\cap R=\mathfrak{m}^{r}\setminus\mathfrak{m}^{r+1}$
and we get that $\ord_{T}$ restricted to $R$ is equal to $\ord_{R}$.
Consequently, $\ord_{R}$ extends to a valuation of the field of fractions
of $R$ with valuation ring equal to $T$.
\end{proof}
From Proposition \ref{prop:przeksztalcenia kwadratowe} we infer that
the quadratic transformation $S_{\mathfrak{p}}$ of $R$ is again
a regular local domain. If $\height\mathfrak{p}>1$ then $\dim S_{\mathfrak{p}}>1$,
thus we may set $R'=S_{\mathfrak{p}}$ and consider a quadratic transformation
of $R'$. This leads to an inductive process, where at each step we
must choose the `center' of the next quadratic transformation. This
process is finite exactly when at some point as the `center' we take
a height one prime ideal. In this case we end up with a discrete valuation
ring of rank one. 

In what follows we will be interested in the situation in which the
above process is driven by a certain valuation $\nu$ with center
$\mathfrak{m}$ on $R$. Here, at each step as the next `center' we
take the ideal $R_{i}\cap\mathfrak{m}_{\nu}$. As a result we get
a sequence (finite or not) of quadratic transformations along $\nu$:
\begin{equation}
R=R_{0}\subset R_{1}\subset\cdots\subset R_{\nu}.\label{eq:sequence}
\end{equation}

\begin{rem}
Actually, the sequence \ref{eq:sequence} is uniquely determined by
the valuation $\nu$. To see this it is enough to check that a local
quadratic transformation $\left(T,\mathfrak{n}\right)$ of $\left(R,\mathfrak{m}\right)$
along $\nu$ is unique. Let $x,x'\in\mathfrak{m}\setminus\mathfrak{m}^{2}$
be such that $xR_{\nu}=\mathfrak{m}R_{\nu}=x'R_{\nu}$. Set $S:=R\left[\frac{\mathfrak{m}}{x}\right]$,
$\mathfrak{p}:=\mathfrak{m}_{\nu}\cap S$, $T:=S_{\mathfrak{p}}$
and similarly $S':=R\left[\frac{\mathfrak{m}}{x'}\right]$, $\mathfrak{p}':=\mathfrak{m}_{\nu}\cap S'$,
$T':=S'_{\mathfrak{p}'}$. Since $x'/x\in S\setminus\mathfrak{p}$,
$x'/x$ is invertible in $T$. Hence $x'T=xT=\mathfrak{n}$ and $S'\subset T$,
where we set $\mathfrak{n}:=\mathfrak{p}T$. Moreover, $\mathfrak{n}\cap S'=\left(\mathfrak{m}_{\nu}\cap T\right)\cap S'=\mathfrak{m}_{\nu}\cap S'=\mathfrak{p}'$.
Thus $T=T'$ by Lemma \ref{lem:wybor x}. 
\end{rem}
\begin{thm}[{\cite[Proposition 3, Lemma 12]{Abh1956}}]
\label{thm:Main Abhyankar} The sequence (\ref{eq:sequence}) is
finite if and only if $\nu$ is a divisorial valuation with respect
to $R$. In this case there exists $m\geqslant1$ such that 
\[
R=R_{0}\subset R_{1}\subset\cdots\subset R_{m-1}\subset R_{m}=R_{\nu}.
\]
Moreover, if $\dim R=2$ and the sequence (\ref{eq:sequence}) is
infinite, then 
\[
R_{\nu}=\bigcup_{i}R_{i}\quad\text{ and }\quad\mathfrak{m}_{\nu}=\bigcup_{i}\mathfrak{m}_{i},
\]
where $\mathfrak{m}_{i}$ stands for the maximal ideal of $R_{i}$.
\end{thm}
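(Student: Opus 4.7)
The plan is to use part 3 of Proposition \ref{prop:przeksztalcenia kwadratowe} as the engine, applied inductively along the sequence of quadratic transforms. Telescoping that identity along $R = R_0 \subset R_1 \subset \cdots \subset R_i$ yields
\[
\dim R - \dim R_i = \trdeg_{R/\mathfrak{m}}\nu - \trdeg_{R_i/\mathfrak{m}_i}\nu,
\]
so $\dim R_i$ is a weakly decreasing sequence of positive integers. The chain continues strictly past $R_i$ precisely when $\dim R_i \geq 2$: a regular local ring of dimension $\geq 2$ has non-principal maximal ideal, so any quadratic transform enlarges it, while a $1$-dimensional regular local ring is already a DVR (for which $S = R[\mathfrak{m}/x] = R$). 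Hence the sequence is finite if and only if some $R_m$ has dimension $1$, and in that case $R_m$ is a DVR contained in $R_\nu$; since the only valuation overrings of a DVR in its fraction field $K$ are itself and $K$, we conclude $R_m = R_\nu$. Feeding $\dim R_m = 1$ and $\trdeg_{R_m/\mathfrak{m}_m}\nu = 0$ (since $R_m/\mathfrak{m}_m$ is the residue field of $\nu$) back into the telescoping identity gives $\trdeg_{R/\mathfrak{m}}\nu = \dim R - 1$, i.e., $\nu$ is divisorial.

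For the converse (divisorial implies finite), the same formula gives $\dim R_i = 1 + \trdeg_{R_i/\mathfrak{m}_i}\nu$ as long as the sequence continues; the right-hand side is a weakly decreasing non-negative integer, so it stabilizes and $\dim R_i$ eventually stays constant. The main obstacle here is to rule out an infinite tail at fixed dimension---this is the heart of Abhyankar's argument, exploiting that a divisorial valuation has rank $1$ (by Proposition \ref{rank inequality}) and is discrete, forcing finiteness via a monotonicity argument on the $\nu$-values of the generators of the $\mathfrak{m}_i$.

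For the two-dimensional infinite case, the first part of the theorem forces $\dim R_i = 2$ for every $i$; the inclusions $\bigcup_i R_i \subseteq R_\nu$ and $\bigcup_i \mathfrak{m}_i \subseteq \mathfrak{m}_\nu$ are immediate from $R_i \subseteq R_\nu$ and $\mathfrak{m}_i = \mathfrak{m}_\nu \cap R_i$. For the reverse inclusions, it suffices to prove $R_\nu \subseteq \bigcup_i R_i$, since then $z \in \mathfrak{m}_\nu$ lies in some $R_i$ and therefore in $\mathfrak{m}_\nu \cap R_i = \mathfrak{m}_i$. Given $f \in R_\nu$, write $f = a/b$ with $a,b \in R$, $b \neq 0$, and $\nu(a) \geq \nu(b)$. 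The key claim is that the ideal $(a,b) R_i$ becomes principal for some $i$. Granting this, write $(a,b) R_i = (g) R_i$ with $a = ga'$, $b = gb'$ and $(a',b') R_i = R_i$; then one of $a', b'$ is a unit of $R_i$, and the inequality $\nu(a) \geq \nu(b)$ forces $b'$ to be the unit (otherwise $\nu(b') > 0 = \nu(a')$ would yield $\nu(a) = \nu(g) < \nu(g) + \nu(b') = \nu(b)$), so $f = a'/b' \in R_i$. The hardest step is the principalization claim itself: it is a version of Zariski's classical result for two-dimensional regular local rings, and is proved by tracking the orders $\ord_{R_j}(a), \ord_{R_j}(b)$ through the sequence, extracting the exceptional monomial factors $x_j^{\ord_{R_j}(\cdot)}$ to form strict transforms, and arguing that after finitely many steps one of the strict transforms becomes a unit in $R_j$.
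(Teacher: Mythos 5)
The paper does not actually prove this theorem --- it is imported verbatim from Abhyankar \cite{Abh1956} (Proposition 3 and Lemma 12) --- so there is no internal argument to compare yours against, and your proposal must be judged on its own. Its skeleton is sound and is indeed the skeleton of Abhyankar's proof: telescoping assertion 3) of Proposition \ref{prop:przeksztalcenia kwadratowe} to get $\dim R-\dim R_{i}=\trdeg_{R/\mathfrak{m}}\nu-\trdeg_{R_{i}/\mathfrak{m}_{i}}\nu$; noting that the chain strictly increases while $\dim R_{i}\geqslant2$ and that a term of dimension one is a DVR, hence equals $R_{\nu}$; and reducing $R_{\nu}\subset\bigcup_{i}R_{i}$ to the principalization of $(a,b)R_{i}$. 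All of this --- including the direction ``finite $\implies$ divisorial'' and the deduction of $\mathfrak{m}_{\nu}=\bigcup_{i}\mathfrak{m}_{i}$ from $R_{\nu}=\bigcup_{i}R_{i}$ via $\mathfrak{m}_{i}=\mathfrak{m}_{\nu}\cap R_{i}$ --- is correct.

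However, the two statements that carry the entire mathematical weight of the theorem are left unproved. First, for ``divisorial $\implies$ finite'' you only gesture at ``a monotonicity argument on the $\nu$-values of the generators of the $\mathfrak{m}_{i}$''; no such argument is exhibited, and the auxiliary claim that a divisorial valuation is \emph{discrete} does not follow from Proposition \ref{rank inequality}, which only yields $\rk\nu=1$ and hence an embedding $\Gamma_{\nu}\hookrightarrow\mathbb{R}$; discreteness requires the refined Abhyankar inequality involving the rational rank, which is not among the paper's stated tools. Second, the principalization claim: asserting that it ``is a version of Zariski's classical result'' and sketching ``tracking the orders \ldots{} extracting the exceptional monomial factors'' is a pointer to a proof, not a proof --- in particular you never show that the order of the strict transforms actually reaches zero after finitely many steps (the non-increase of these orders is easy; the eventual strict decrease is precisely where dimension two and the choice of centers along $\nu$ must enter, as Example \ref{exa: przyk=000142ad tr=0000F3jwymiarowy} shows the statement fails in dimension three). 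As it stands, the proposal is an accurate outline in which the two hard lemmas are cited rather than proved --- which is, in fairness, essentially what the paper itself does by referring to \cite{Abh1956}.
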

\begin{lem}
\label{lem:wniosek z glownego}Let $\left(R,\mathfrak{m}\right)$
be a two-dimensional local regular domain and let $\nu$ be a valuation
with center $\mathfrak{m}$ on $R$. Assume that (\ref{eq:sequence})
is a sequence of quadratic transformations along $\nu$. Let $F\subset R_{\nu}\setminus\left\{ 0\right\} $
be a finite set and let $h\in R_{\nu}\setminus\left\{ 0\right\} $
be such that for every $f\in F$ we have $f/h\in\mathfrak{m_{\nu}}$.
Then there exists $i\geqslant0$ such that $\dim R_{i}=2$ and $\min_{f\in F}\ord_{R_{i}}f>\ord_{R_{i}}h$.
\end{lem}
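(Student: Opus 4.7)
The condition $f/h \in \mathfrak{m}_{\nu}$ is just $\nu(f) > \nu(h)$. Writing $g_f := f/h$ for each $f \in F$, the goal reduces to finding some $i$ with $\dim R_i = 2$ such that every $g_f$ lies in $\mathfrak{m}_i$: indeed, since the Corollary preceding Theorem~\ref{thm:Main Abhyankar} shows that $\ord_{R_i}$ extends to a rank-one valuation of the fraction field, the multiplicativity of this valuation yields
\[
\ord_{R_i} f \;=\; \ord_{R_i} h + \ord_{R_i} g_f \;\geqslant\; \ord_{R_i} h + 1 \;>\; \ord_{R_i} h
\]
for every $f \in F$, as required.

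We split into two cases according to Theorem~\ref{thm:Main Abhyankar}. If $\nu$ is \emph{not} divisorial, then the sequence (\ref{eq:sequence}) is infinite; since a one-dimensional $R_i$ would already be a DVR and would coincide with $R_\nu$ (thereby terminating the process), every $R_i$ in the sequence has dimension two. The equalities $\mathfrak{m}_i = R_i \cap \mathfrak{m}_\nu$ turn $\mathfrak{m}_0 \subset \mathfrak{m}_1 \subset \cdots$ into an ascending chain of subsets of $R_\nu$ whose union is $\mathfrak{m}_\nu$ by Theorem~\ref{thm:Main Abhyankar}. Since $F$ is finite, for $i$ large enough all $g_f$ lie simultaneously in $\mathfrak{m}_i$, closing this case.

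If $\nu$ is divisorial, the sequence terminates as $R_0 \subset \cdots \subset R_{m-1} \subset R_m = R_\nu$ with $\dim R_{m-1} = 2$ and $\dim R_m = 1$. The key claim is that $\nu$ is equivalent to $\ord_{R_{m-1}}$, so the hypothesis $\nu(f) > \nu(h)$ transports directly and $i = m-1$ works. To prove the claim, pick $x \in \mathfrak{m}_{m-1} \setminus \mathfrak{m}_{m-1}^2$ with $x R_\nu = \mathfrak{m}_{m-1} R_\nu$ and set $S := R_{m-1}[\mathfrak{m}_{m-1}/x]$, $\mathfrak{p} := S \cap \mathfrak{m}_\nu$, so that $R_m = S_{\mathfrak{p}}$ and $\height \mathfrak{p} = \dim R_m = 1$. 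The prime $\mathfrak{p}$ contains $x$; but by (\ref{eq:S gwiazdka}), $S/xS$ is a one-variable polynomial ring over $R_{m-1}/\mathfrak{m}_{m-1}$ and hence a domain, so $xS$ is itself a height-one prime. Therefore $\mathfrak{p} = xS$, and the Corollary preceding Theorem~\ref{thm:Main Abhyankar} identifies $S_{\mathfrak{p}}$ with the valuation ring of $\ord_{R_{m-1}}$. Since $R_\nu = R_m = S_{\mathfrak{p}}$, the valuations $\nu$ and $\ord_{R_{m-1}}$ are equivalent, as claimed.

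The main obstacle is the divisorial case, where one must identify the terminal quadratic transform $R_m = R_\nu$ with the valuation ring of the order function on $R_{m-1}$; this in turn hinges on the uniqueness of a height-one prime of $S$ containing $x$. Once this identification is in place, both cases reduce to the multiplicative calculation of the first paragraph.
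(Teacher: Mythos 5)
Your proof is correct and follows essentially the same route as the paper's: locate an index $i$ at which every $f/h$ lies in $\mathfrak{m}_{i}$ (via Theorem \ref{thm:Main Abhyankar}, splitting into the infinite and the terminating case), exploit multiplicativity of the order valuation $\ord_{R_{i}}$, and in the divisorial case pass to $R_{m-1}$ by identifying the terminal ring $R_{m}=R_{\nu}$ with the valuation ring of $\ord_{R_{m-1}}$. The additional detail you supply there (that $\mathfrak{p}=xS$, so $R_{m}=S_{\mathfrak{p}}$ is the valuation ring of the order function) is precisely what the paper's Corollary on the order function and the surrounding discussion already record, so no new idea is involved.
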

\begin{proof}
By Theorem \ref{thm:Main Abhyankar} there exists $i$ such that $f/h\in\mathfrak{m}_{i}$
for any $f\in F$. Hence $\min_{f\in F}\ord_{R_{i}}f>\ord_{R_{i}}h$.
Thus, we get the assertion if $\dim R_{i}=2$. So, assume that $\dim R_{i}=1$.
This means that the sequence (\ref{eq:sequence}) is necessarily finite
and $R_{i}=R_{\nu}$ is a valuation ring of $\ord_{R_{i-1}}$. It
follows that $\ord_{R_{i-1}}=\ord_{R_{i}}$. Since $\dim R_{i-1}=2$,
we get the assertion.
\end{proof}

\section{Hamburger-Noether expansion\label{sec:Hamburger-Noether-expansion}}

Let $\left(R,\mathfrak{m}\right)$ be an $n$-dimensional local regular
domain, $n>1$. We will assume in this section that there exists an
algebraically closed field $k\subset R$ such that $k\to R/\mathfrak{m}$
is an isomorphism. 
\begin{lem}
\label{lem:equicharacteristic lemma}Let $\left(T,\mathfrak{n}\right)$
be a quadratic transformation of $R$. Then the following conditions
are equivalent:
\begin{enumerate}[label=\arabic{enumi}.]
\item $\dim T=n$,\label{enu:1}
\item $\trdeg_{R/\mathfrak{m}}T/\mathfrak{n}=0$,\label{enu:2}
\item the natural homomorphism $k\to T/\mathfrak{n}$ is an isomorphism,\label{enu:4}
\item for every regular system of parameters $x_{1},\ldots,x_{n}$ of $R$
there exist $j\in\left\{ 1,\ldots,n\right\} $ and $a_{1},\ldots a_{j-1},a_{j+1},\ldots,a_{n}\in k$
such that 
\[
\frac{x_{1}}{x_{j}}-a_{1},\ldots,\frac{x_{j-1}}{x_{j}}-a_{j-1},x_{j},\frac{x_{j+1}}{x_{j}}-a_{j+1},\ldots,\frac{x_{n}}{x_{j}}-a_{n}
\]
 is a regular system of parameters of $T$.\label{enu:3}
\end{enumerate}
\end{lem}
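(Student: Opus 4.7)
The plan is to establish the cycle $(1) \Leftrightarrow (2) \Rightarrow (3) \Rightarrow (4) \Rightarrow (3)$, noting that $(3) \Rightarrow (2)$ is trivial. The equivalence $(1) \Leftrightarrow (2)$ is immediate from Proposition \ref{prop:przeksztalcenia kwadratowe}, part \ref{enu:dwa}: writing $T = S_{\mathfrak{p}}$ for the prime $\mathfrak{p}$ containing the chosen generator $x$ of $\mathfrak{m}T$, that result reads $\trdeg_{R/\mathfrak{m}} T/\mathfrak{n} = \dim R - \dim T = n - \dim T$, so (1) and (2) are simultaneously true or false. For $(2) \Rightarrow (3)$, the composition $k \hookrightarrow R/\mathfrak{m} \hookrightarrow T/\mathfrak{n}$ exhibits $T/\mathfrak{n}$ as a field extension of $k$; under (2) this extension is algebraic, and since $k$ is algebraically closed the extension is trivial, which gives (3).

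The main step is $(3) \Rightarrow (4)$. Fix a regular system of parameters $x_1, \ldots, x_n$ of $R$. By definition there exists some $x \in \mathfrak{m}\setminus\mathfrak{m}^2$ with $xT = \mathfrak{m}T$, so each $x_i = x u_i$ for some $u_i \in T$. Expressing $x = \sum c_i x_i$ with $c_i \in R$ and substituting back gives $1 = \sum c_i u_i$ in the domain $T$; reducing modulo $\mathfrak{n}$ and using (3), we find $\sum \bar c_i \bar u_i = 1$ in $k$, so some $u_j$ must be a unit in $T$. Consequently $x_j T = x T = \mathfrak{m}T$, and Lemma \ref{lem:wybor x} allows us to rewrite $T$ as $S_{\mathfrak{p}}$ with $S = R[\mathfrak{m}/x_j]$ and $\mathfrak{p} = \mathfrak{n}\cap S$. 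Applying Proposition \ref{prop:1} (reindexed so that the distinguished variable is $x_j$) together with the presentation used in the proof of Proposition \ref{prop:przeksztalcenia kwadratowe}, one obtains $S/x_j S \cong k[Y_i : i \neq j]$, where $Y_i$ is the image of $x_i/x_j$; the image $\mathfrak{p}^{\star}$ of $\mathfrak{p}$ in $S/x_j S$ is a prime of height $\dim T - 1 = n-1$, hence a maximal ideal of this $(n-1)$-dimensional polynomial ring. Since $k$ is algebraically closed, Hilbert's Nullstellensatz gives $\mathfrak{p}^{\star} = (Y_i - a_i : i \neq j)$ for some $a_i \in k$; lifting and extending to $T$ yields
\[
\mathfrak{n} = \bigl(x_j,\; x_i/x_j - a_i \ (i \neq j)\bigr)\, T,
\]
a set of $n = \dim T$ generators of the maximal ideal in a regular local ring of dimension $n$, which is therefore a regular system of parameters.

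Finally, $(4) \Rightarrow (3)$ is a direct calculation: every element of $T$ may be written as $P(x_i/x_j)/Q(x_i/x_j)$ with $P, Q \in R[Y_i : i \neq j]$ and $Q(x_i/x_j) \notin \mathfrak{n}$; reducing modulo $\mathfrak{n}$ via $x_i/x_j \equiv a_i$ and $R \to R/\mathfrak{m} = k$ produces $P(a)/Q(a) \in k$, so $T/\mathfrak{n} = k$. The principal obstacle is the Nullstellensatz step inside $(3) \Rightarrow (4)$: this is both what pins down the linear form of the generators $Y_i - a_i$ of $\mathfrak{p}^{\star}$ and the place where the hypothesis that $k$ is algebraically closed is genuinely used. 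A secondary but necessary ingredient is the Nakayama-style argument guaranteeing that, from an arbitrary regular system of parameters of $R$, at least one $x_j$ can take over the role of the (a priori arbitrary) generator $x$ of $\mathfrak{m}T$.
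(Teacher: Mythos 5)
Your proof is correct, and its overall skeleton matches the paper's: the same cycle of implications through the auxiliary results (Proposition \ref{prop:1}, Proposition \ref{prop:przeksztalcenia kwadratowe} and Lemma \ref{lem:wybor x}), with the passage from condition 2 to condition 3 handled identically. The one genuinely different step is the implication from condition 3 to condition 4. The paper simply \emph{defines} $a_{i}$ as the residue of $x_{i}/x_{1}$ in $T/\mathfrak{n}\cong k$ and then checks by a direct computation with elements of $S=R\left[\frac{\mathfrak{m}}{x_{1}}\right]$ that $\mathfrak{p}=\left(x_{1},\frac{x_{2}}{x_{1}}-a_{2},\ldots,\frac{x_{n}}{x_{1}}-a_{n}\right)S$, obtaining $\dim T=n$ as a by-product; you instead first import $\dim T=n$ (via $3\Rightarrow2\Rightarrow1$, which is legitimate because you establish those implications independently), conclude that $\mathfrak{p}^{\star}$ is a maximal ideal of the $(n-1)$-dimensional polynomial ring, and call on the Nullstellensatz to produce the linear generators $Y_{i}-a_{i}$. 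The two arguments use the algebraic closedness of $k$ in essentially the same place, but the paper's is more self-contained (it never needs the Nullstellensatz, only that $T/\mathfrak{n}=k$), while yours localizes the choice of the $a_{i}$ in a standard, citable fact. Two further points in your favour: you spell out the Nakayama-style argument showing that some $x_{j}$ from the \emph{given} regular system of parameters satisfies $x_{j}T=\mathfrak{m}T$, which the paper compresses into ``without loss of generality'' even though condition 4 quantifies over all regular systems of parameters (note, though, that your reduction modulo $\mathfrak{n}$ is unnecessary here: in the local ring $T$ the identity $1=\sum c_{i}u_{i}$ already forces some $u_{j}\notin\mathfrak{n}$). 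On the other hand, to close the cycle you prove $4\Rightarrow3$ by an explicit computation in $S_{\mathfrak{p}}$, where the paper settles for the cheaper $4\Rightarrow1$ (``obvious''); your computation is correct but does more work than the logic requires.
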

\begin{proof}
\ref{enu:1}$\implies$\ref{enu:2} Follows from Proposition \ref{prop:przeksztalcenia kwadratowe}.

\ref{enu:2}$\implies$\ref{enu:4} By the assumptions the field $R/\mathfrak{m}$
is algebraically closed and the field extension $k=R/\mathfrak{m}\subset T/\mathfrak{n}$
is algebraic. Hence, the last inclusion is in fact equality. Consequently,
the field $k\subset T$ is isomorphic with the residue field of $T$.

\ref{enu:4}$\implies$\ref{enu:3} The ideal $\mathfrak{m}T$ is
principal, hence without loss of generality we may assume that $\mathfrak{m}T=x_{1}T$.
Choose $a_{i}\in k$ as the image of $x_{i}/x_{1}$ in $T/\mathfrak{n}$.
Put $S:=R\left[\frac{\mathfrak{m}}{x_{1}}\right]$, $\mathfrak{p}:=\mathfrak{n}\cap S$.
Then by Lemma \ref{lem:wybor x} we have $T=S_{\mathfrak{p}}$, $\mathfrak{n}=\mathfrak{p}T$.
Every $f\in S$ may be written in the form 
\[
f=f_{0}+A\left(\frac{x_{2}}{x_{1}}-a_{2},\ldots,\frac{x_{n}}{x_{1}}-a_{n}\right),
\]
where $f_{0}\in R$ and $A\in R\left[Y_{2},\ldots,Y_{n}\right]$ is
a polynomial without constant term. We have $f\in\mathfrak{n}$ if
and only if $f_{0}\in\mathfrak{m}$, hence 
\begin{align*}
\mathfrak{p} & =\left(x_{1},\frac{x_{2}}{x_{1}}-a_{2},\ldots,\frac{x_{n}}{x_{1}}-a_{n}\right)R\left[\frac{\mathfrak{m}}{x_{1}}\right].
\end{align*}
Thus 
\[
\frac{\mathfrak{p}}{x_{1}S}\simeq\left(Y_{2}-a_{2},\ldots,Y_{n}-a_{n}\right)\frac{R}{\mathfrak{m}}\left[Y\right],
\]
by Proposition \ref{prop:1}. Consequently $\dim T=\dim S_{\mathfrak{p}}=\height\mathfrak{p}=n$.

\ref{enu:3}$\implies$\ref{enu:1} Obvious.
\end{proof}
\begin{example}
\label{exa: przyk=000142ad tr=0000F3jwymiarowy}Set 
\begin{gather*}
\nu\left(x\right):=\left(0,0,1\right),\\
\nu\left(y\right):=\left(0,1,0\right),\\
\nu\left(z\right):=\left(1,0,0\right)
\end{gather*}
and for any $f\in\kk\left[\left[x,y,z\right]\right]\setminus\left\{ 0\right\} $
put as $\nu\left(f\right)$ the lexicographic minimum of 
\[
\left\{ a\nu\left(x\right)+b\nu\left(y\right)+c\nu\left(z\right):\left(a,b,c\right)\in\supp f\right\} ,
\]
where $\supp f$ denotes the set of $\left(a,b,c\right)\in\mathbb{Z}^{3}$
such that the monomial $x^{a}y^{b}z^{c}$ appears in the expansion
of $f$ with non-zero coefficient. It is easy to see that $\nu$ extends
to a valuation with center $\left(x,y,z\right)\kk\left[\left[x,y,z\right]\right]$.
The value group $\Gamma_{\nu}$ is equal to $\mathbb{Z}^{3}$ with
lexicographical ordering. Let
\[
\kk\left[\left[x,y,z\right]\right]=:R_{0}\subset R_{1}\subset\cdots\subset R_{\nu}
\]
be the sequence of successive quadratic transformations of $\kk\left[\left[x,y,z\right]\right]$
along $\nu$. Observe that $\nu\left(z/y\right)=\left(1,-1,0\right)>\mathbf{0}$,
hence $z/y\in R_{\nu}$. Nevertheless, we claim that $z/y\notin\bigcup_{i=0}^{\infty}R_{i}$.
Indeed, set $S:=R_{0}\left[y/x,z/x\right]$ and notice that, since
$\nu\left(x\right)<\nu\left(y\right)<\nu\left(z\right)$, we have
\[
\mathfrak{p}:=\mathfrak{m}_{\nu}\cap S=\left(x,\frac{y}{x},\frac{z}{x}\right)S
\]
is a maximal ideal in $S$. Thus $R_{1}=\left(R_{0}\right)_{\mathfrak{p}}$
and $x_{1}:=x$, $y_{1}:=y/x$, $z_{1}:=z/x$ is the regular system
of parameters in $R_{1}$, where again $\nu\left(x_{1}\right)<\nu\left(y_{1}\right)<\nu\left(z_{1}\right)$.
Obviously $z/y=z_{1}/y_{1}\notin R_{1}$ and in the same way $z/y\notin R_{2}$
and so on. This proves that the second statement in the Theorem \ref{thm:Main Abhyankar}
does not hold in the multidimensional case. 
\end{example}
\begin{lem}
\label{lem:parametry indukcja}Let $\left(T,\mathfrak{n}\right)$
be an $n$-dimensional local regular domain such that there exists
a sequence 
\begin{equation}
R=R_{0}\subset R_{1}\subset\cdots\subset R_{m}=T,\label{eq:finite sequence}
\end{equation}
where for each $i=1,\ldots,m$, $R_{i}$ is a quadratic transformation
of $R_{i-1}$. Set $x_{1},\ldots,x_{n}$ as the generators of $\mathfrak{m}$.
Then there exists a regular system of parameters $y_{1},\ldots,y_{n}$
of $T$ and polynomials $A_{1},\ldots,A_{n}\in k\left[Y_{1},\ldots,Y_{n}\right]$
such that $x_{j}=A_{j}\left(y_{1},\ldots,y_{n}\right)$, $j=1,\ldots,n$.
\end{lem}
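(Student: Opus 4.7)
I would prove this by induction on the length $m$ of the sequence.

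First, a preparatory observation: by Proposition \ref{prop:przeksztalcenia kwadratowe}\ref{enu:dwa}, each quadratic transformation $R_{i-1}\to R_i$ satisfies $\dim R_i=\dim R_{i-1}-\trdeg_{R_{i-1}/\mathfrak{m}_{i-1}}R_i/\mathfrak{m}_i\le\dim R_{i-1}$, so the dimensions are non-increasing. Since $\dim R=n=\dim T$, every intermediate ring $R_i$ is $n$-dimensional. In particular, the hypotheses of Lemma \ref{lem:equicharacteristic lemma} are met at every step of the sequence.

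The base case $m=0$ is trivial: take $y_j:=x_j$ and $A_j:=Y_j$. For the inductive step, assume the statement holds for the sequence $R=R_0\subset\cdots\subset R_{m-1}$; that is, there exist a regular system of parameters $z_1,\ldots,z_n$ of $R_{m-1}$ and polynomials $B_1,\ldots,B_n\in k[Z_1,\ldots,Z_n]$ with $x_j=B_j(z_1,\ldots,z_n)$. Since $T=R_m$ is a quadratic transform of $R_{m-1}$ and $\dim T=n$, Lemma \ref{lem:equicharacteristic lemma}\ref{enu:3} applied to the regular system $z_1,\ldots,z_n$ produces an index $j\in\{1,\ldots,n\}$ and constants $a_i\in k$ ($i\ne j$) such that
\[
y_j:=z_j,\qquad y_i:=\frac{z_i}{z_j}-a_i\quad(i\ne j)
\]
is a regular system of parameters of $T$.

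From these formulas I can invert: $z_j=y_j$ and $z_i=y_j(y_i+a_i)=y_jy_i+a_iy_j$ for $i\ne j$. Hence each $z_i$ is a polynomial $C_i(y_1,\ldots,y_n)\in k[Y_1,\ldots,Y_n]$. Substituting,
\[
x_j=B_j(z_1,\ldots,z_n)=B_j\bigl(C_1(y_1,\ldots,y_n),\ldots,C_n(y_1,\ldots,y_n)\bigr)=:A_j(y_1,\ldots,y_n),
\]
which is a polynomial in $y_1,\ldots,y_n$ with coefficients in $k$, completing the induction.

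The only non-bookkeeping point is the invocation of Lemma \ref{lem:equicharacteristic lemma}\ref{enu:3}, which is exactly where the hypothesis $\dim T=n$ is used (propagated through the chain by the dimension inequality above). No genuine obstacle arises — the substitution of polynomial expressions is closed under composition, which is what makes the induction go through.
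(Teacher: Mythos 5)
Your proof is correct and follows essentially the same route as the paper: induction on $m$, the dimension count via Proposition \ref{prop:przeksztalcenia kwadratowe} to ensure every $R_i$ is $n$-dimensional, the application of Lemma \ref{lem:equicharacteristic lemma} to produce the new regular system of parameters, and the inversion $z_i=y_j(y_i+a_i)$ followed by polynomial composition. The only difference is cosmetic — you spell out the composition step that the paper leaves as ``we may easily define polynomials $A_1,\ldots,A_n$.''
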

\begin{proof}
Induction with respect to $m$. The case $m=0$ is trivial. Assume
that the assertion is true for some $m-1\geqslant0$. Consider the
sequence (\ref{eq:finite sequence}). By Proposition \ref{prop:przeksztalcenia kwadratowe}
we have $\dim R_{0}\geqslant\dim R_{1}\geqslant\cdots\geqslant\dim R_{m}$.
Thus, for each $i=0,\ldots,m$, $\dim R_{i}=n$. By the induction
hypothesis there exist a regular system of parameters $y_{1}',\ldots,y'_{n}$
of $R_{m-1}$ and polynomials $A'_{1},\ldots,A'_{n}\in k\left[Y_{1},\ldots,Y_{n}\right]$
such that $x_{j}=A'_{j}\left(y'_{1},\ldots,y'_{n}\right)$, $j=1,\ldots,n$.
On the other hand, by Lemma \ref{lem:equicharacteristic lemma}, there
exist $j_{0}$, a regular system of parameters $y_{1},\ldots,y_{n}$
of $R_{m}$ and $a_{1},\ldots a_{j_{0}-1},a_{j_{0}+1},\ldots,a_{n}\in k$
such that 
\begin{align*}
y'_{1}= & y_{j_{0}}\left(y_{1}+a_{1}\right),\\
\vdots\\
y'_{j_{0}-1}= & y_{j_{0}}\left(y_{j_{0}-1}+a_{j_{0}-1}\right),\\
y'_{j_{0}}= & y_{j_{0}},\\
y'_{j_{0}+1}= & y_{j_{0}}\left(y_{j_{0}+1}+a_{j_{0}+1}\right),\\
\vdots\\
y'_{n}= & y_{j_{0}}\left(y_{n}+a_{n}\right).
\end{align*}
Now, according to the above equalities we may easily define polynomials
$A_{1},\ldots,A_{n}$.
\end{proof}
Let $R:=k\left[\left[x_{1},\ldots,x_{n}\right]\right]$ be the ring
of formal power series and let $f\in R\setminus\left\{ 0\right\} $.
We will write $\initial f$ for the \emph{initial form }of $f$, which
is the lowest degree non-zero homogeneous form in the expansion of
$f$. Clearly, $\ord_{R}f$ is equal to the degree of the initial
form of $f$. For the ring of formal power series $R$ as above we
will often write $\ord_{\left(x_{1},\ldots,x_{n}\right)}$ instead
of $\ord_{R}$. 
\begin{lem}
\label{lem: ord =00003D rzad w szeregach}Let $R:=k\left[\left[x_{1},\ldots,x_{n}\right]\right]$
be a ring of formal power series. Let $\left(T,\mathfrak{n}\right)$
be an $n$-dimensional local regular domain between $R$ and field
of fractions of $R$. Assume that there exists a regular system of
parameters $y_{1},\ldots,y_{n}$ of $T$ and polynomials $A_{1},\ldots,A_{n}\in k\left[Y_{1},\ldots,Y_{n}\right]$
such that $x_{j}=A_{j}\left(y_{1},\ldots,y_{n}\right)$, $j=1,\ldots,n$.
Then for every non-zero $f\in R$ we have
\[
\ord_{T}f=\ord_{\left(Y_{1},\ldots,Y_{n}\right)}f\left(A_{1}\left(Y_{1},\ldots,Y_{n}\right),\ldots,A_{n}\left(Y_{1},\ldots,Y_{n}\right)\right).
\]
\end{lem}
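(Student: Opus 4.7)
The plan is to pass to the $\mathfrak{n}$-adic completion of $T$ and identify it with $k[[Y_1,\ldots,Y_n]]$, where the formal substitution $x_j = A_j(Y)$ makes transparent sense; then the order function on $T$ is read off from the one on the completion.

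First I would verify two preliminary facts. Since $T \subset \operatorname{Frac}(R)$, we have $\trdeg_R T = 0$, and $\dim T = n = \dim R$. Apply the dimension inequality (Theorem \ref{thm:dimension inequality}) to $R \subset T$ with $\mathfrak{q}=\mathfrak{n}$ and $\mathfrak{p}=\mathfrak{n}\cap R$: it gives $n + \trdeg_{R/\mathfrak{p}} T/\mathfrak{n} \leqslant \height \mathfrak{p} \leqslant n$, so $\mathfrak{p}=\mathfrak{m}$ and $T/\mathfrak{n}$ is algebraic over $R/\mathfrak{m}=k$. Since $k$ is algebraically closed, this forces the natural map $k \to T/\mathfrak{n}$ to be an isomorphism, and $\mathfrak{m}\subset\mathfrak{n}$. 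In particular, $A_j(0)$ is the residue class of $x_j$ in $T/\mathfrak{n}=k$, and since $x_j\in\mathfrak{m}\subset\mathfrak{n}$, we have $A_j(0)=0$, so each $A_j$ has no constant term; hence $f(A_1(Y),\ldots,A_n(Y))$ is a well-defined element of $k[[Y_1,\ldots,Y_n]]$ for any $f\in R$.

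Next I would identify $\widehat{T}$, the $\mathfrak{n}$-adic completion, with $k[[Y_1,\ldots,Y_n]]$. By Cohen's structure theorem (or directly: $y_1,\ldots,y_n$ generate $\mathfrak{n}$ and $T/\mathfrak{n}=k$), the $k$-algebra homomorphism $\Phi\colon k[[Y_1,\ldots,Y_n]] \to \widehat{T}$ sending $Y_j\mapsto y_j$ is surjective, and since both rings are $n$-dimensional regular local domains, $\Phi$ is an isomorphism. Note also that since $T$ is Noetherian, $\mathfrak{n}^k\widehat{T}\cap T=\mathfrak{n}^k$ for every $k$, so $\ord_T g = \ord_{\widehat{T}} g$ for every $g\in T\setminus\{0\}$, where orders on $\widehat{T}\cong k[[Y]]$ are the usual minimal-degree orders.

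Now consider the composition $\iota\colon R \hookrightarrow T \to \widehat{T}\cong k[[Y_1,\ldots,Y_n]]$. Because $\mathfrak{m}\subset\mathfrak{n}$, the inclusion $R\hookrightarrow T$ is continuous for the $\mathfrak{m}$-adic and $\mathfrak{n}$-adic topologies, and so is the completion map, so $\iota$ is continuous. For $f=\sum c_\alpha x^\alpha$, the formal sum $\sum c_\alpha A(y)^\alpha$ converges in $\widehat{T}$ (each $A_j(y)\in\mathfrak{n}$), and by continuity it must equal $\iota(f)$. Transporting via $\Phi^{-1}$ back to $k[[Y]]$, this element is precisely the power series $f(A_1(Y),\ldots,A_n(Y))$.

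Combining these observations yields
\[
\ord_T f \;=\; \ord_{\widehat{T}}\iota(f) \;=\; \ord_{(Y_1,\ldots,Y_n)} f(A_1(Y),\ldots,A_n(Y)),
\]
which is the desired equality. The only non-routine step is the residue-field identification in the first paragraph; once one has $T/\mathfrak{n}=k$ and $\mathfrak{m}\subset\mathfrak{n}$, the remainder is a formal manipulation with the $\mathfrak{n}$-adic completion.
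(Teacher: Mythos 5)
Your proof is correct, but it takes a genuinely different route from the paper's. The paper argues entirely inside $T$: writing $\Phi:=(A_{1},\dots,A_{n})$, it first treats the case where $f$ is a polynomial, noting that $f(\Phi(Y))$ is a nonzero polynomial whose initial form $P$ satisfies $\ord_{T}P(y_{1},\dots,y_{n})=\deg P$ because $y_{1},\dots,y_{n}$ is a regular system of parameters (so the associated graded ring of $T$ is a polynomial ring over $T/\mathfrak{n}$ and $P$ has coefficients in $k\hookrightarrow T/\mathfrak{n}$), whence $\ord_{T}f=\deg P=\ord_{(Y_{1},\dots,Y_{n})}f(\Phi(Y))$; a general power series is then handled by truncating its tail at high degree. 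You instead pass to the $\mathfrak{n}$-adic completion, identify $\widehat{T}$ with $k[[Y_{1},\dots,Y_{n}]]$ via Cohen's structure theorem, and read the order off there by a continuity argument. Both are sound. A real merit of your version is that you explicitly establish two facts the paper uses silently: that $\mathfrak{n}\cap R=\mathfrak{m}$ and $T/\mathfrak{n}=k$ (via the dimension inequality), hence that each $A_{j}$ has zero constant term --- without which the substitution $f(A_{1}(Y),\dots,A_{n}(Y))$ is not even defined for a genuine power series $f$, and without which the paper's own truncation step ($f-\tilde{f}\in\mathfrak{m}^{D+1}\subset\mathfrak{n}^{D+1}$) would not go through. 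The trade-off is machinery: the paper's argument is more elementary (no completions, no structure theorem), which suffices in the intended application where $T$ comes from a chain of quadratic transforms and these facts are automatic by Lemma \ref{lem:wybor x}; yours is heavier but self-contained with respect to the hypotheses as literally stated.
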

\begin{proof}
Set $\Phi:=\left(A_{1},\ldots,A_{n}\right)$. Take $f\in R$, $f\ne0$. 

First, assume that $f$ is a polynomial. We have
\[
f\left(x_{1},\ldots,x_{n}\right)=f\left(\Phi\left(Y_{1},\ldots,Y_{n}\right)\right)_{|Y_{1}=y_{1},\ldots,Y_{n}=y_{n}}.
\]
Thus $f\left(\Phi\left(Y_{1},\ldots,Y_{n}\right)\right)$ is a non-zero
polynomial. Let $P:=\initial f\left(\Phi\left(Y_{1},\ldots,Y_{n}\right)\right)$.
Since $y_{1},\ldots,y_{n}$ is a regular system of parameters of $T$,
\[
\ord_{T}f=\ord_{T}P\left(y_{1},\ldots,y_{n}\right)=\deg P=\ord_{\left(Y_{1},\ldots,Y_{n}\right)}f\left(\Phi\left(Y_{1},\ldots,Y_{n}\right)\right),
\]
 which gives the assertion in this case.

If $f$ is an arbitrary non-zero power series then, cutting  the tail
in the power series expansion of $f$, we find a polynomial $\tilde{f}\in R$
such that $\ord_{T}f=\ord_{T}\tilde{f}$ and $\ord_{\left(Y_{1},\ldots,Y_{n}\right)}f=\ord_{\left(Y_{1},\ldots,Y_{n}\right)}\tilde{f}$.
By the case considered above we have $\ord_{T}\tilde{f}=\ord_{\left(Y_{1},\ldots,Y_{n}\right)}\tilde{f}$. 
\end{proof}

\section{Parametric criterion of integral dependence\label{sec:Parametric-criterion-of}}

Let $R=k\left[\left[x,y\right]\right]$, $\Delta=k\left[\left[t\right]\right]$
be the rings of formal power series over an algebraically closed field
$k$. Let $\mathfrak{m}$ and $\mathfrak{d}$ be the maximal ideals
of $R$ and $\Delta$ respectively. For any $\varphi\in\mathfrak{d}\times\mathfrak{d}$
we have a natural local $k$-homomorphism $\varphi^{*}\colon R\to\Delta$
given by the substitution.
\begin{thm}
\label{thm:integrality criterion}Let $\mathfrak{a}$ be an ideal
in $R$ and let $h\in R.$ Then $h$ is integral over $\mathfrak{a}$
if and only if $\varphi^{*}h\in\varphi^{*}\mathfrak{a}$ for any $\varphi\in\mathfrak{d}\times\mathfrak{d}$.
\end{thm}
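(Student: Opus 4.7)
The forward direction is the routine half: if $h \in \overline{\mathfrak{a}}$ satisfies $h^{N} + a_{1} h^{N-1} + \cdots + a_{N} = 0$ with $a_{i} \in \mathfrak{a}^{i}$, then applying the ring homomorphism $\varphi^{*}$ yields an integral equation for $\varphi^{*}h$ over $\varphi^{*}\mathfrak{a}$ in $\Delta$. Since $\Delta$ is a discrete valuation ring, each of its (principal) ideals is integrally closed, so $\varphi^{*}h \in \overline{\varphi^{*}\mathfrak{a}} = \varphi^{*}\mathfrak{a}$; the degenerate case $\varphi^{*}\mathfrak{a}=0$ forces $\varphi^{*}h$ to be nilpotent, hence zero, in the domain $\Delta$.

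For the converse I would argue by contraposition. Suppose $h \notin \overline{\mathfrak{a}}$. The valuative criterion of integral dependence (Theorem~\ref{thm:valuative criterion}) then delivers a rank-one discrete valuation $\nu$ on the field of fractions of $R$, centered at $\mathfrak{m}$, with $h \notin \mathfrak{a} R_{\nu}$. Writing $\mathfrak{a}=(f_{1},\ldots,f_{m})$, this is equivalent to $\nu(h) < \nu(f_{j})$, i.e. $f_{j}/h \in \mathfrak{m}_{\nu}$, for every $j$. Applying Lemma~\ref{lem:wniosek z glownego} to $F:=\{f_{1},\ldots,f_{m}\}$ furnishes a stage $R_{i}$ of the Hamburger--Noether sequence along $\nu$ at which $\dim R_{i} = 2$ and
\[
\min_{j} \ord_{R_{i}} f_{j} \;>\; \ord_{R_{i}} h.
\]

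Lemmas~\ref{lem:parametry indukcja} and \ref{lem: ord =00003D rzad w szeregach} let me realise these orders as orders of substituted power series: there exist a regular system of parameters $y_{1},y_{2}$ of $R_{i}$ and polynomials $A_{1},A_{2} \in k[Y_{1},Y_{2}]$, automatically without constant term (by Lemma~\ref{lem:equicharacteristic lemma}, since $x,y \in \mathfrak{m} \subset \mathfrak{m}_{i}$ and $k \cong R_{i}/\mathfrak{m}_{i}$), such that $x = A_{1}(y_{1},y_{2})$, $y = A_{2}(y_{1},y_{2})$, and $\ord_{R_{i}} g = \ord_{(Y_{1},Y_{2})} g(A_{1},A_{2})$ for every non-zero $g \in R$. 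The bivariate-to-univariate reduction I would then carry out by substituting $Y_{1}=u_{1}t$, $Y_{2}=u_{2}t$ for a sufficiently generic pair $(u_{1},u_{2}) \in k^{2}$---specifically, outside the finitely many proper Zariski-closed subsets of $k^{2}$ on which the initial forms of $h(A_{1},A_{2})$, $f_{j}(A_{1},A_{2})$, and $A_{1},A_{2}$ vanish. Since $k$ is algebraically closed, hence infinite, such a $(u_{1},u_{2})$ exists. Setting $\varphi(t) := (A_{1}(u_{1}t,u_{2}t),\, A_{2}(u_{1}t,u_{2}t)) \in \mathfrak{d}\times\mathfrak{d}$ produces a non-zero parametrization with $\ord \varphi^{*}h < \min_{j}\ord\varphi^{*}f_{j}$; since $\Delta$ is a DVR, $\varphi^{*}\mathfrak{a} = t^{d}\Delta$ with $d = \min_{j}\ord \varphi^{*}f_{j}$, and therefore $\varphi^{*}h \notin \varphi^{*}\mathfrak{a}$, contradicting the hypothesis.

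The main obstacle is precisely the simultaneous generic-direction argument in the final step: a single $(u_{1},u_{2})$ must preserve orders for all the relevant two-variable series while keeping $\varphi$ non-trivial in $\Xi$. This is the point at which the valuative statement provided by Lemma~\ref{lem:wniosek z glownego} is converted into the parametric statement required by the theorem, and it is where the standing hypothesis that $k$ is an infinite algebraically closed field enters in an essential way.
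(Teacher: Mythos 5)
Your proposal is correct and follows essentially the same route as the paper: the forward implication by pushing the integral equation through $\varphi^{*}$ into the discrete valuation ring $\Delta$, and the converse by contraposition via Theorem \ref{thm:valuative criterion}, Lemma \ref{lem:wniosek z glownego}, and the generic line $(Y_{1},Y_{2})=(u_{1}t,u_{2}t)$ applied to the polynomials from Lemmas \ref{lem:parametry indukcja} and \ref{lem: ord =00003D rzad w szeregach}. The only differences are cosmetic (phrasing the first half via integral closedness of ideals in a DVR, and spelling out that the $A_{j}$ have no constant term), so no further comment is needed.
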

\begin{proof}
Assume that $h$ is integral over $\mathfrak{a}$. There exist an
integer $N$ and the elements $a_{j}\in\mathfrak{a}^{j}$, $j=1,\ldots,N$,
such that
\[
h^{N}+a_{1}h^{N-1}+\cdots+a_{N}=0.
\]
 Take parametrization $\varphi\in\mathfrak{d}^{2}$. Let $r:=\ord_{R}\mathfrak{a}$.
Then
\[
N\mathrm{ord}_{\Delta}\varphi^{*}h\geqslant\min_{j}\left(rj+\left(N-j\right)\mathrm{ord}_{\Delta}\varphi^{*}h\right).
\]
 This gives $\mathrm{ord}_{\Delta}\varphi^{*}h\geqslant r$, hence
$\varphi^{*}h\in\varphi^{*}\mathfrak{a}$. 

Assume now, that $h$ is not integral over $\mathfrak{a}$. Since
the case $\mathfrak{a}=0$ is clear, in what follows we will assume
that $\mathfrak{a}\ne0$. By the valuative criterion of integral dependence
(Theorem \ref{thm:valuative criterion}) there exists a valuation
$\nu$ with center $\mathfrak{m}$ on $R$ such that $h\notin\mathfrak{a}R_{\nu}$.
Consider the sequence of successive quadratic transformations of $R$
along $\nu$: 
\[
R=R_{0}\subset R_{1}\subset\cdots\subset R_{\nu}.
\]
Denote by $\mathfrak{m}_{i}$ the only maximal ideal of $R_{i}$,
$i\geqslant0$. Let $F\subset R\setminus\left\{ 0\right\} $ be any
finite set of generators of $\mathfrak{a}$. Then $f/h\in\mathfrak{m}_{\nu}$
for any $f\in F$. Hence, by Lemma \ref{lem:wniosek z glownego} there
exists $i\geqslant0$ such that $\dim R_{i}=2$ and $\min_{f\in F}\ord_{R_{i}}f>\ord_{R_{i}}h$.
By Lemmas \ref{lem:parametry indukcja} and \ref{lem: ord =00003D rzad w szeregach},
there exist polynomials $A,B\in k\left[X,Y\right]$ such that for
any $g\in R$, $\ord_{R_{i}}g=\ord_{\left(X,Y\right)}g\left(A\left(X,Y\right),B\left(X,Y\right)\right)$.
Set $P_{g}\left(X,Y\right):=\initial g\left(A\left(X,Y\right),B\left(X,Y\right)\right)$
for $g\in R$. Then $\deg P_{g}=\ord_{R_{i}}g$. Let $\left(a,b\right)\in k^{2}$
be such that $P_{h}\left(a,b\right)\ne0$ and $P_{f}\left(a,b\right)\ne0$
for $f\in F$. Put $\varphi:=\left(A\left(at,bt\right),B\left(at,bt\right)\right)$.
Clearly $\ord_{\Delta}\varphi^{*}h=\deg P_{h}$ and $\ord_{\Delta}\varphi^{*}f=\deg P_{f}$
for $f\in F$. Hence $\ord_{\Delta}\varphi^{*}h<\min_{f\in F}\ord_{\Delta}\varphi^{*}f=\min_{f\in\mathfrak{a}}\ord_{\Delta}\varphi^{*}f$,
so $\varphi^{*}h\notin\varphi^{*}\mathfrak{a}$. 
\end{proof}

\begin{example}
Let $R=\kk[[x,y]]$, where $\kk$ is an algebraically closed field.
Consider $\mathfrak{a}\assign(x^{2}+y^{3},x^{3})$, $h\assign y^{4}$,
$f\assign x^{2}+y^{3}$. Let $\varphi\assign(t^{3},-t^{2})\in\mathfrak{d}\times\mathfrak{d}$.
Notice that $\varphi^{\ast}f=0$. Now, for any $g\in R\setminus\{0\}$
we define $\nu(g)\assign\left(k,\ord_{\Delta}\varphi^{\ast}g'\right)$,
where $g=f^{k}g'$ and $\gcd(f,g')=1$. It is easy to check that $\nu$
extends to a valuation with center $(x,y)R$ on $R$. We will find
the Hamburger-Noether expansion along $\nu$. Using this we will show
that $h$ is not integral over $\mathfrak{a}$.
\begin{description}
\item [{First~step.}] \begin{flushleft}
We have $\nu(x)=(0,3)$, $\nu(y)=(0,2)$, so we
put $x_{1}\assign\frac{x}{y}$, $y_{1}\assign y$.
\par\end{flushleft}
\item [{Second~step.}] \begin{flushleft}
Now $\nu(x_{1})=(0,1)$, $\nu(y_{1})=(0,2)$, so
let $x_{2}\assign x_{1}$, $y_{2}:=\frac{y_{1}}{x_{1}}$.
\par\end{flushleft}

\end{description}
\begin{flushleft}
Continuing in the above manner we get 
\par\end{flushleft}

\begin{tabular}{c p{12cm}}
\centering
{\small{}}%
\begin{tabular}{|p{1.75cm}|p{2.5cm}|p{2.5cm}|p{3cm}|}
\hline 
{\small{}Recursive formula for $x_{i}$, $y_{i}$ } & \multicolumn{1}{c|}{\small{}Valuation}  & \multicolumn{1}{c|}{\small{}$x_{i}$, $y_{i}$ in terms of $x$, $y$} & {\small{}$x$, $y$ in terms of $x_{i}$, $y_{i}$}\tabularnewline
\hline 
{\small{}$x_{1}\assign\tfrac{x}{y}$,  \newline$y_{1}\assign y$ } & {\small{}$\nu(x_{1})=(0,1)$, \newline$\nu(y_{1})=(0,2)$ } & {\small{}$x_{1}=\tfrac{x}{y}$,\newline $y_{1}=y$ } & {\small{}$x=x_{1}y_{1}$,\newline $y=y_{1}$}\tabularnewline
\hline 
{\small{}$x_{2}\assign x_{1}$, \newline$y_{2}\assign\tfrac{y_{1}}{x_{1}}$ } & {\small{}$\nu(x_{2})=(0,1)$,\newline $\nu(y_{2})=(0,1)$ } & {\small{}$x_{2}=\tfrac{x}{y}$,\newline $y_{2}=\tfrac{y^{2}}{x}$ } & {\small{}$x=x_{2}^{2}y_{2}$, \newline$y=x_{2}y_{2}$}\tabularnewline
\hline 
{\small{}$x_{3}\assign x_{2}$, \newline$y_{3}\assign\tfrac{y_{2}}{x_{2}}+1$ } & {\small{}$\nu(x_{3})=(0,1)$,\newline $\nu(y_{3})=(1,-6)$ } & {\small{}$x_{3}=\tfrac{x}{y}$,\newline $y_{3}=\tfrac{y^{3}+x^{2}}{x^{2}}$ } & {\small{}$x=x_{3}^{3}(y_{3}-1)$, \newline$y=x_{3}^{2}(y_{3}-1)$}\tabularnewline
\hline 
{\small{}$x_{4}\assign x_{3}$, \newline$y_{4}\assign\tfrac{y_{3}}{x_{3}}$ } & {\small{}$\nu(x_{4})=(0,1)$,\newline $\nu(y_{4})=(1,-7)$ } & {\small{}$x_{4}=\tfrac{x}{y}$,\newline $y_{4}=\tfrac{(y^{3}+x^{2})y}{x^{3}}$ } & {\small{}$x=x_{4}^{3}(x_{4}y_{4}-1)$, \newline$y=x_{4}^{2}(x_{4}y_{4}-1)$}\tabularnewline
\hline 
{\small{}$x_{5}\assign x_{4}$, \newline$y_{5}\assign\tfrac{y_{4}}{x_{4}}$ } & {\small{}$\nu(x_{5})=(0,1)$,\newline $\nu(y_{5})=(1,-8)$ } & {\small{}$x_{5}=\tfrac{x}{y}$,\newline $y_{5}=\tfrac{(y^{3}+x^{2})y^{2}}{x^{4}}$ } & {\small{}$x=x_{5}^{3}(x_{5}^{2}y_{5}-1)$,\newline $y=x_{5}^{2}(x_{5}^{2}y_{5}-1)$}\tabularnewline
\hline 
{\small{}$x_{6}\assign x_{5}$, \newline$y_{6}\assign\tfrac{y_{5}}{x_{5}}$ } & {\small{}$\nu(x_{6})=(0,1)$,\newline $\nu(y_{6})=(1,-9)$ } & {\small{}$x_{6}=\tfrac{x}{y}$,\newline $y_{6}=\tfrac{(y^{3}+x^{2})y^{3}}{x^{5}}$ } & {\small{}$x=x_{6}^{3}(x_{6}^{3}y_{6}-1)$, \newline$y=x_{6}^{2}(x_{6}^{3}y_{6}-1)$}\tabularnewline
\hline 
\multicolumn{1}{|c|}{\small{}$\vdots$ } &\multicolumn{1}{c|}{\small{}$\vdots$ } &\multicolumn{1}{c|}{\small{}$\vdots$ } & \multicolumn{1}{c|}{\small{}$\vdots$ }\tabularnewline
\hline 
{\small{}$x_{i}\assign x_{i-1}$, \newline$y_{i}\assign\tfrac{y_{i-1}}{x_{i-1}}$ } & {\small{}$\nu(x_{i})=(0,1)$,\newline $\nu(y_{i})=(1,-i-3)$ } & {\small{}$x_{i}=\tfrac{x}{y}$,\newline $y_{i}=\tfrac{(y^{3}+x^{2})y^{i-3}}{x^{i-1}}$ } & {\small{}$x=x_{i}^{3}(x_{i}^{i-3}y_{i}-1)$,\newline $y=x_{i}^{2}(x_{i}^{i-3}y_{i}-1)$}\tabularnewline
\hline 
\end{tabular}
\tabularnewline
{\centering\small{}Successive steps of the Hamburger-Noether algorithm. }\tabularnewline\tabularnewline
\end{tabular} 

Hence, $\mathfrak{a}R_{i}=(x_{i}^{6}(x_{i}^{i-3}y_{i}-1)^{2}+x_{i}^{6}(x_{i}^{i-3}y_{i}-1)^{3},x_{i}^{9}(x_{i}^{i-3}y_{i}-1)^{3})R_{i}=x_{i}^{9}R_{i}$
and $hR_{i}=x_{i}^{8}R_{i}$ for $i\geqslant6$. Thus $h\notin\bigcup_{i\geqslant6}\mathfrak{a}R_{i}=\mathfrak{a}R_{\nu}$.
Observe also that $y^{5}\in\overline{\mathfrak{a}}\setminus\mathfrak{a}$. 
\end{example}

\section{The main result\label{sec:The-main-result}}

\setcounter{thm}{0}

We keep the notations from the previous section. In particular $R=\kk\left[\left[x,y\right]\right]$,
$\kk$ is algebraically closed and for an ideal $\mathfrak{a}\subset R$
we have

\[
\mathfrak{L}\left(\mathfrak{a}\right)=\sup_{\mathbf{0}\ne\varphi\in\mathfrak{d}\times\mathfrak{d}}\left(\inf_{f\in\mathfrak{a}}\frac{\ord_{\Delta}\varphi^{*}f}{\ord_{\Delta}\varphi^{*}\left(x,y\right)R}\right)=\sup_{\mathbf{0}\ne\varphi\in\mathfrak{d}\times\mathfrak{d}}\frac{\ord_{\Delta}\varphi^{*}\mathfrak{a}}{\ord_{\Delta}\varphi^{*}\left(x,y\right)R}.
\]
Recall that we want to prove the following
\begin{thm}
Let $\mathfrak{a}\subset R$ be an ideal. Then 
\begin{equation}
\ll(\mathfrak{a})=\inf\left\{ \frac{p}{q}:\left(x,y\right)^{p}R\subset\overline{\mathfrak{a}^{q}}\right\} .\label{eq:main equality}
\end{equation}
\end{thm}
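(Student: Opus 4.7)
The strategy is to apply the parametric criterion of integral dependence (Theorem~\ref{thm:integrality criterion}) together with a short computation of orders in the DVR $\Delta=\kk[[t]]$, reducing the equality to a density argument in $\mathbb{Q}$.

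First I would record two numerical identities valid for every nonzero $\varphi\in\mathfrak{d}\times\mathfrak{d}$:
\[
\ord_\Delta\varphi^\ast(x,y)^p R \;=\; p\cdot\ord_\Delta\varphi^\ast(x,y)R, \qquad \ord_\Delta\varphi^\ast\mathfrak{a}^q \;=\; q\cdot\ord_\Delta\varphi^\ast\mathfrak{a}.
\]
Both are immediate from the rule $\ord_\Delta(uv)=\ord_\Delta u+\ord_\Delta v$ in $\Delta$, together with the observation that $\mathfrak{b}^n$ is generated by $n$-fold products of generators of $\mathfrak{b}$; applied to $\mathfrak{b}=(x,y)R$ with generators $x^iy^{p-i}$, the minimum $\min_i\bigl(i\ord\varphi_1+(p-i)\ord\varphi_2\bigr)$ is $p\min(\ord\varphi_1,\ord\varphi_2)=p\ord_\Delta\varphi^\ast(x,y)R$.

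Next I would translate $(x,y)^p R\subset\overline{\mathfrak{a}^q}$ via Theorem~\ref{thm:integrality criterion}: applied to every $h\in(x,y)^p R$, the containment holds if and only if $\varphi^\ast h\in\varphi^\ast\mathfrak{a}^q\cdot\Delta$ for all $\varphi\in\mathfrak{d}\times\mathfrak{d}$. Since $\Delta$ is a DVR, this is in turn equivalent to $\ord_\Delta\varphi^\ast h\geqslant\ord_\Delta\varphi^\ast\mathfrak{a}^q$. Taking the infimum over a generating set of $(x,y)^p R$ (the case $\varphi=\mathbf{0}$ being vacuous), and using the two identities above, this reduces to
\[
p\cdot\ord_\Delta\varphi^\ast(x,y)R \;\geqslant\; q\cdot\ord_\Delta\varphi^\ast\mathfrak{a} \qquad\text{for every }\mathbf{0}\ne\varphi\in\mathfrak{d}\times\mathfrak{d},
\]
i.e.\ to $p/q\geqslant\ord_\Delta\varphi^\ast\mathfrak{a}/\ord_\Delta\varphi^\ast(x,y)R$ for every such $\varphi$.

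Taking the supremum over $\varphi$ and using the expression for $\ll(\mathfrak{a})$ recalled at the beginning of Section~\ref{sec:The-main-result}, the containment $(x,y)^p R\subset\overline{\mathfrak{a}^q}$ is therefore equivalent to $p/q\geqslant\ll(\mathfrak{a})$. Hence
\[
\inf\!\left\{\tfrac{p}{q}:(x,y)^p R\subset\overline{\mathfrak{a}^q}\right\} \;=\; \inf\!\left\{\tfrac{p}{q}:\tfrac{p}{q}\geqslant\ll(\mathfrak{a}),\ p,q\in\mathbb{Z}_{>0}\right\} \;=\; \ll(\mathfrak{a}),
\]
where the last step uses the density of $\mathbb{Q}_{>0}$ in $\mathbb{R}_{>0}$ when $\ll(\mathfrak{a})<\infty$, and is tautological (both sides equal $+\infty$) when $\ll(\mathfrak{a})=+\infty$ (in which case no $(p,q)$ satisfies the inclusion, since $\overline{\mathfrak{a}^q}\subset\sqrt{\mathfrak{a}^q}$ cannot contain $(x,y)^p R$). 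No step is really the main obstacle here, as Theorem~\ref{thm:integrality criterion} does the heavy lifting; the only care required is in verifying the order identity for $(x,y)^p R$ via its monomial generators and in tracking the degenerate cases ($\mathfrak{a}=0$, $\mathfrak{a}$ not $(x,y)$-primary, or a component of $\varphi$ equal to zero).
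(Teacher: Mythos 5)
Your proof is correct and follows essentially the same route as the paper's: both arguments reduce the equality, via Theorem~\ref{thm:integrality criterion} and the additivity of $\ord_\Delta$ on products of (generators of) ideals, to the equivalence ``$(x,y)^pR\subset\overline{\mathfrak{a}^q}$ iff $p/q\geqslant\ll(\mathfrak{a})$''. The only cosmetic difference is that the paper splits this into two inequalities and handles the height-one case separately by exhibiting a parametrization annihilating a generator of a height-one prime containing $\mathfrak{a}$, whereas you fold everything into a single equivalence plus the density of $\mathbb{Q}_{>0}$.
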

\begin{proof}
The cases $\mathfrak{a}=R$ or $\mathfrak{a}=0$ are trivial. Assume
that $\mathfrak{a}$ is a proper ideal and $\height\mathfrak{a}=1$.
Then, clearly, the right hand side of (\ref{eq:main equality}) is
equal to $\infty$. Let $\mathfrak{p}\subset R$ be a height one prime
ideal such that $\mathfrak{a}\subset\mathfrak{p}$. By \cite[Appendix C]{Plo2013}
there exists $f\in R$ such that $\mathfrak{p}=fR$. Hence, one can
find $\varphi\in\mathfrak{d}\times\mathfrak{d}$ such that $\varphi^{*}f=0$
\cite[Theorem 2.1]{Plo2013}. Consequently $\mathfrak{L}\left(\mathfrak{a}\right)=\infty$. 

Now, assume that $\height\mathfrak{a}=2$, so that $\mathfrak{a}$
is $\left(x,y\right)R$-primary.

`$\leqslant$' Fix any $p>0$, $q>0$ such that $\left(x,y\right)^{p}R\subset\overline{\mathfrak{a}^{q}}$.
Take $\varphi\in\mathfrak{d}\times\mathfrak{d}$. Without loss of
generality we may assume that $\ord_{\Delta}\varphi^{*}x\leqslant\ord_{\Delta}\varphi^{*}y$.
Since $x^{p}\in\overline{\mathfrak{a}^{q}}$, Theorem \ref{thm:integrality criterion}
asserts that $\ord_{\Delta}\varphi^{*}x^{p}\geqslant\ord_{\Delta}\varphi^{*}\mathfrak{a}^{q}$.
This easily gives 
\[
\frac{p}{q}\geqslant\frac{\ord_{\Delta}\varphi^{*}\mathfrak{a}}{\ord_{\Delta}\varphi^{*}x}=\frac{\ord_{\Delta}\varphi^{*}\mathfrak{a}}{\ord_{\Delta}\varphi^{*}\left(x,y\right)R}.
\]
Hence $p/q\geqslant\mathfrak{L}\left(\mathfrak{a}\right)$ and consequently
we get the desired inequality.

`$\geqslant$' Take any $p>0$, $q>0$ such that $p/q\geqslant\mathfrak{L}\left(\mathfrak{a}\right)$.
Then, for every $\varphi\in\mathfrak{d}\times\mathfrak{d}$, $\varphi\ne\mathbf{0}$,
we have
\[
\frac{p}{q}\geqslant\frac{\ord_{\Delta}\varphi^{*}\mathfrak{a}}{\ord_{\Delta}\varphi^{*}\left(x,y\right)R}
\]
or, what amounts to the same thing, $\ord_{\Delta}\varphi^{*}\left(x,y\right)^{p}R\geqslant\ord_{\Delta}\varphi^{*}\mathfrak{a}^{q}$.
Hence, for any $h\in\left(x,y\right)^{p}R$ we have $\ord_{\Delta}\varphi^{*}h\geqslant\ord_{\Delta}\varphi^{*}\mathfrak{a}^{q}$.
Thus, $\left(x,y\right)^{p}R\subset\overline{\mathfrak{a}^{q}}$,
by Theorem \ref{thm:integrality criterion}. As a result, we get the
inequality `$\geqslant$' in (\ref{eq:main equality}).
\end{proof}

\vspace{1cm}

\textsc{Szymon Brzostowski}

\noindent\textsc{Faculty of Mathematics and Computer Science, University of Łódź
S. Banacha 22, 90-238 Łódź, Poland}

\emph{E-mail address}: \textbf{brzosts@math.uni.lodz.pl}

\vspace{5mm}

\textsc{Tomasz Rodak}

\noindent\textsc{Faculty of Mathematics and Computer Science, University of Łódź
S. Banacha 22, 90-238 Łódź, Poland}

\emph{E-mail address}: \textbf{rodakt@math.uni.lodz.pl}

\end{document}